\title{Hyperbolic branching Brownian motion: \\ the empirical limit measure}
\author{David Geldbach\footnote{Department of Statistics, University of Oxford, 24-29 St Giles, OX1 3LB Oxford, United Kingdom.}}
\date{\today}
\newtheorem{thm}{Theorem}
\newtheorem{lem}[thm]{Lemma}
\newtheorem{prop}[thm]{Proposition}
\newtheorem{cor}[thm]{Corollary}
\newtheorem*{conj}{Conjecture}
\theoremstyle{definition}
\newtheorem{quest}[thm]{Question}
\numberwithin{equation}{section} 
\numberwithin{thm}{section}
\newcommand*{\eps}{\ensuremath{\varepsilon}}            % das bessere epsilon
\newcommand*{\CC}{\ensuremath{\mathbb{C}}} 		        % komplexe Zahlen
\newcommand*{\RR}{\ensuremath{\mathbb{R}}}	        	% reelle Zahlen
\newcommand*{\NN}{\ensuremath{\mathbb{N}}}		        % natürliche Zahlen
\newcommand*{\EE}{\ensuremath{\mathbb{E}}}	        	% Erwartung
\newcommand*{\PP}{\ensuremath{\mathbb{P}}}	        	% W-Maß
\newcommand*{\DD}{\ensuremath{\mathbb{D}}}	        	% unit disk       	% 
\newcommand*{\HH}{\ensuremath{\mathbb{H}}}	        	% harmonic functions
\newcommand*{\II}{\ensuremath{\mathds{1}}}	        	% Indikatorfunktion
\newcommand*{\supp}{\ensuremath{\mathrm{supp} \ }}	        % support
\newcommand*{\diam}{\ensuremath{\mathrm{diam} \ }}	        % support
\newcommand*{\Var}{\ensuremath{\mathrm{Var}  }}	        % support
\begin{document}

\maketitle

\begin{abstract}
    We study branching Brownian motion in hyperbolic space. As hyperbolic Brownian motion is transient, the normalised empirical measure of branching Brownian motion converges to a random measure $\mu_\infty$ on the boundary. We show that the Hausdorff dimension of $ \mu_\infty$ is $(2\beta)\wedge 1$ where $\beta$ is the branching rate, and that $\mu_\infty$ admits a Lebesgue density for $\beta>1/2$. This is very different to the behaviour of the set of accumulation points on the boundary where $\beta_c=1/8$ which has been shown by Lalley and Sellke \cite{lalley_hyperbolic_1997}. This answers several questions posed by Woess \cite{woess_notes_2024} and similar questions posed by Candellero and Hutchcroft \cite{candellero_boundary_2023}. We believe that our methods also apply to branching random walks on non-elementary hyperbolic groups. 
\end{abstract}

{\noindent \bf 2010 Mathematics Subject Classification:} 
 60J80, %branching process.
 60D05, % Geometric probability and stochastic geometry
 60J65. %Brownian motion
 % 28A80 Fractals (in measure theory)
 % 60J50 Boundary theory (in Markov processes)
 % 60K35 interacting random processes (in special processes)
 \\
{\bf Keywords:} Branching Brownian motion, Hausdorff dimension, hyperbolic space. %limit set, 

\begin{figure}[h]
    \centering
    % \begin{tabular}{c c c}
    %      \includegraphics[width=0.3\linewidth]{0_bbm_plot_002_65_012.png}
    %      &
    %      \includegraphics[width=0.3\linewidth]{bbm_plot_006_20_040.png}
    %      &
    %      \includegraphics[width=0.3\linewidth]{bbm_plot_004_8_0100.png}
    % \end{tabular} 
    \includegraphics[width=0.75\linewidth]{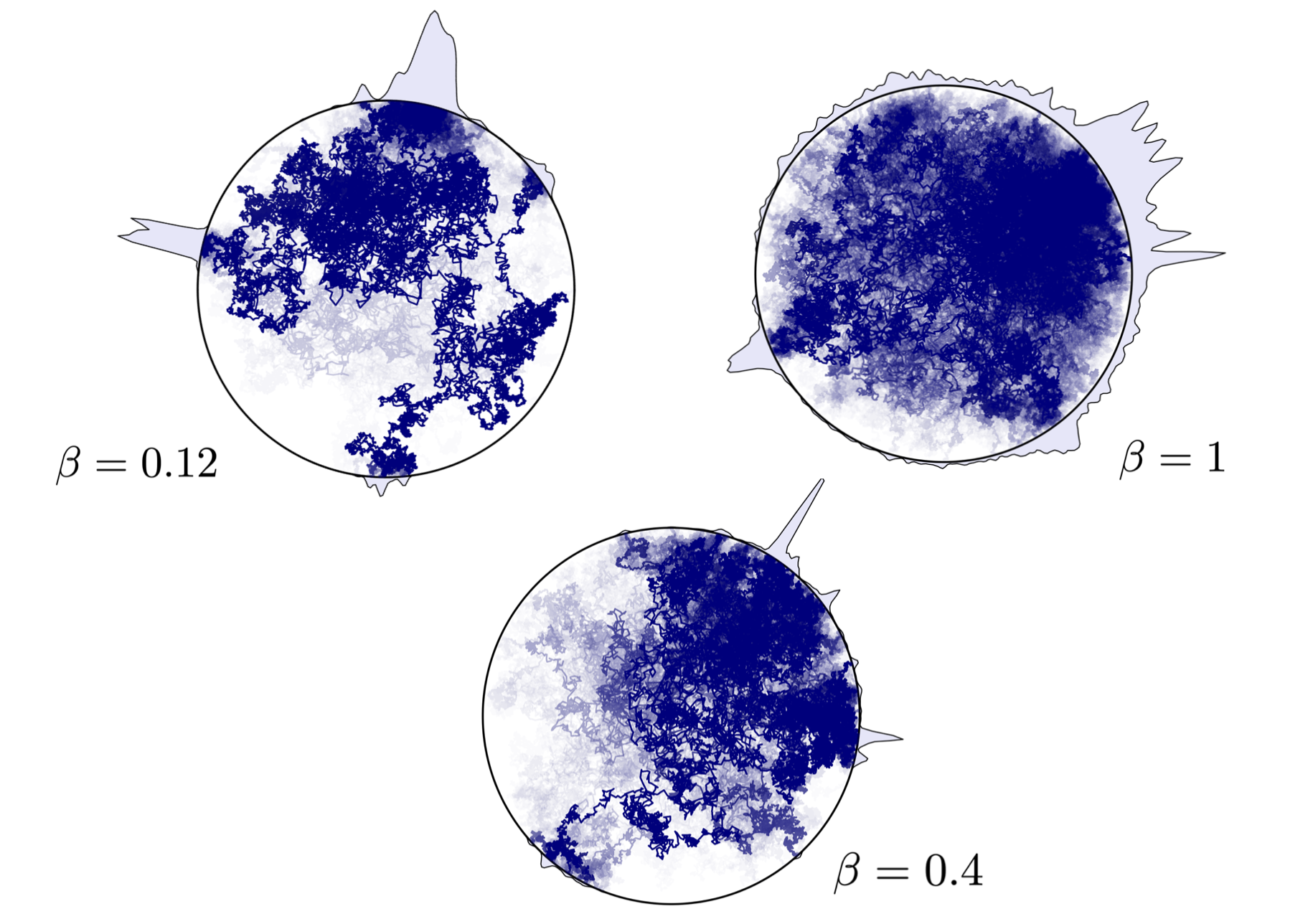}
    \caption{Three simulations of hyperbolic BBM and the limit of its empirical distribution $\mu_\infty$ on the boundary. The branching rates are $\beta \in \{ 0.12, 0.4, 1\}$ from left to right. The Hausdorff dimensions of $\mu_\infty$ are $\{0.24, 0.8, 1\}$ whereas the Hausdorff dimensions of the set of accumulation points on the boundary are $\{0.4, 1,1\}$. Observe that in the middle picture where $\beta=0.4$ there are a lot of paths accumulating on the boundary that do not contribute significantly to $\mu_\infty$.} 
    \label{fig:simulation}
\end{figure}

% {\color{purple}
% ------------------------------------------------------------------------------------------------------------------------------

% General state of this document:
% \begin{enumerate}
%     \item Overall pretty good, I think I have all proofs together. The proof with the many to few formula is more of a sketch but I would like to leave it like that.

%     \item There are a few ends to tidy up here and there

%     \item I should write an open problem section with drift
% \end{enumerate}

% ------------------------------------------------------------------------------------------------------------------------------
% }

\section{Introduction}

Given a transient stochastic process, one can often define a natural extension of the state space so that the process converges almost surely to a point on the boundary of this space. We can think of the distribution of this limit as an exit distribution of the process. These exit distributions {form} a well studied topic in {their own right}, but it becomes even richer when combined with branching processes. The idea here is that we have multiple, possibly infinitely many, correlated particles that all escape towards the boundary. This induces different random structures on the boundary, notably the set $\Lambda$ of accumulation points on the boundary, and a random probability measure $\mu_\infty$ which is the weak limit of the empirical measures.
Loosely speaking, $\Lambda$ is determined by all particles including rare exceptional particles, while $\mu_\infty$ is determined only by the bulk of the particles. We always have that $\mu_\infty(\Lambda)=1$, but it is natural to ask if $\mu_\infty$ is actually supported on a smaller set than $\Lambda$, and if yes, how we can quantify the difference. This is the aim of this paper in the case of branching Brownian motion in hyperbolic space. 

 Branching Brownian motion (BBM) on $\RR$ is an interacting particle system. Particles move as independent Brownian motions and split in two at a given rate $\beta$. 
 Here the the maximal displacement at time $t$, is a major object of interest \cite{AidBerBruShi2013, Bra1978}. 
 On the other hand, the number of particles near the origin always grows exponentially for any $\beta>0$. 
If the underlying space is hyperbolic, the behaviour of BBM is markedly different. {Let} $\beta_c=1/8$, then for any
 $\beta<\beta_c$ the process eventually vacates any compact set almost surely. On the other hand, for any $\beta>\beta_c$, the number of particles near the origin grows exponentially {as in $\RR$}. The same is true for a discrete version of this model, a branching random walk on a homogeneous tree. We give a precise definition of branching Brownian motion in hyperbolic space in the next section, but {also} refer to the recent article by Woess \cite{woess_notes_2024} and the references therein for background on hyperbolic BBM. 

 The limit set $\Lambda$ of hyperbolic BBM {was} first studied by Lalley and Sellke \cite{lalley_hyperbolic_1997} who show{ed} that $\Lambda$ is a fractal--like random set and compute its Hausdorff dimension. (See for example \cite{falconer_fractal_2014} for some background on fractals and Hausdorff dimension.) Others have studied similar sets of accumulation points of branching random walks on the boundary on discrete hyperbolic spaces, see for example \cite{dussaule_branching_2025, hueter_anisotropic_2000,sidoravicius_limit_2023}. Much less is known about $\mu_\infty$ and the sets it's supported on. Even the existence of $\mu_\infty$ has only been shown recently \cite{candellero_boundary_2023, kaimanovich_limit_2023}. In fact,  we {believe} that this paper is the first {work} to show quantitative properties of $\mu_\infty$.

\subsection{The model}

Hyperbolic space is usually modelled with the Poincaré disk model $\DD$ or the upper half plane model $\HH$. We use them interchangeably. They are Riemannian manifolds with metrics given by 
\begin{equation*}
    \frac{2 \sqrt{dx^2+dy^2}}{1-x^2-y^2} 
    \text{ for }(x,y)\in \DD=\{(x,y)\in \RR^2: x^2+y^2<1\},
\end{equation*}
for the disk model and for the upper half plane model by
\begin{equation*}
    \frac{\sqrt{dx^2 + dy^2}}{y^2} \text{ for }(x,y)\in \HH=\{(x,y)\in \RR \times \RR_+\}.
\end{equation*}
The two models are isometric, an isometry $f:\DD \to \HH$ is given by $f(z)=i\frac{1+z}{1-z}$ where we identify $\DD$ and $\HH$ with subsets of $\CC$ by $z=x+iy$. Note that the origin $0\in \DD$ corresponds to $i \in \HH$. Both $\DD$ and $\HH$ are endowed with natural boundaries $\partial \DD$ and $\partial \HH$ given by $\partial\DD=\{z\in \CC: \vert z\vert =1\}$ and $\partial \HH=\{z\in \CC: \Im(z)=0\}$.
The hyperbolic Laplacian is given by 
\begin{equation*}
    \mathcal{L}_\DD = \frac{(1-\vert z\vert^2)^2}{4}\left( \partial_x^2 + \partial_y^2\right), \qquad \text{respectively} \qquad \mathcal{L}_\HH = y^2\left( \partial_x^2 + \partial_y^2\right).
\end{equation*}
From this we define hyperbolic Brownian motion to be the stochastic process with generator $\frac{1}{2}\mathcal{L}_\DD$ (respectively $\frac{1}{2}\mathcal{L}_\HH$). In the upper half plane model we could also do this by solving the pair of stochastic differential equations
\begin{equation*}%\label{eq:SDE}
    dX_t = Y_t dW_t, \quad dY_t =Y_tdB_t,
\end{equation*}
where $(W_t)_t$ and $(B_t)_t$ are independent Brownian motions. This process is then canonically started from $(X_0, Y_0)=(0,1)$. Observe that $(Y_t)_t$ is a geometric Brownian motion, hence we can solve the SDE explicitly in the second coordinate,
\begin{equation*}%\label{eq:solving_Y}
    Y_t = \exp\left( -\frac{t}{2} + B_t\right).
\end{equation*}
This also tells us that $X_t$ is Gaussian with mean $0$ and variance $\int_0^t \exp\left( -s + 2B_s\right) ds$, conditional on $(B_s)_{s\geq 0}$. From this we can see that hyperbolic Brownian motion converges to a random point $(X_\infty,0)$ on the boundary $\partial \HH$ where $X_\infty$ is Gaussian with (random) variance $\int_0^\infty \exp\left( -s + 2B_s\right) ds$.

Having defined hyperbolic Brownian motion, we define hyperbolic branching Brownian motion (BBM) to be the following particle process on $\DD$: At time $0$, we start with one particle at the origin. Particles move as independent hyperbolic Brownian motions. At rate $\beta$, each particle independently branches {in} two; both offspring particles branch and move independently. This results in a cloud of particles, we denote {those positions} by $((X_u(t),Y_u(t)),u\in \mathcal{N}(t) ),$ where $\mathcal{N}(t)$ is the set of particles alive at time $t$. By abuse of notation, we also denote the (isometric) process on $\HH$ by $((X_u(t),Y_u(t)),u\in \mathcal{N}(t) )$. Here the process is started from a single particle at $(0,1)$.

We can relate certain expectations for hyperbolic BBM to expectations of hyperbolic Brownian motion by the many--to--one formula,
\begin{equation}\label{eq:many-to-one}
    \EE_{(x,y)}\left[\sum_{u\in \mathcal{N}(t)      } f((X_u(s),Y_u(s))_{0\leq s\leq t})\right] = e^{\beta t}\EE_{(x,y)}\left[ f((X_s,Y_s)_{0\leq s\leq t})\right],
\end{equation}
for any $(x,y)\in \DD$ and measurable non--negative $f$. This follows from linearity due to the independence of movement and branching. 

\subsection{Results}

We are interested in the long term behaviour of $((X_u(t),Y_u(t)),u\in \mathcal{N}(t) )$, especially related to the boundary. We define the normalised empirical measure at time $t$ to be
\begin{equation*}
    \mu_t = \frac{1}{\vert \mathcal N (t)\vert}\sum_{u \in \mathcal{N}(t)}\delta_{(X_u(t),Y_u(t))}.
\end{equation*}
One can show that there is a measure $\mu_\infty$, supported on the boundary, such that $\mu_t$ converges weakly to $\mu_\infty$ with probability one, see \cite{woess_notes_2024}. 
This follows a simple argument: let $h:\DD \to\RR$ be a non-negative, bounded function which is harmonic with respect to hyperbolic Brownian motion. Then {$(e^{-\beta t}\vert\mathcal{N}(t)\vert)\int_\HH h(z)\mu_t(dz)$} is a martingale for hyperbolic BBM and hence converges almost surely. To obtain weak convergence, one then only needs to check that the space of harmonic functions is sufficiently rich. One can also see (essentially from the many--to--one formula \eqref{eq:many-to-one}) that for any measurable set $A \subseteq \DD \cup \partial \DD$ we have
\begin{equation*}
    \EE\left[ \mu_\infty(A)\right] = \PP \left(\lim_{t \to \infty} (X_t, Y_t)\in A \right),
\end{equation*}
from which it follows that $\mu_\infty$ is supported on the boundary almost surely. The goal of this paper is to better understand $\mu_\infty$. See Figure \ref{fig:simulation} for a simulation of hyperbolic BBM and $\mu_\infty$. One object that is slightly easier to understand is 
\begin{equation*}
    \Lambda = \left\{ \text{accumulation points of } ((X_u(t),Y_u(t)),u\in \mathcal{N}(t) )_{t \geq 0} \text{ in }\partial \DD\right\}.
\end{equation*}
Lalley and Sellke \cite{lalley_hyperbolic_1997} have analysed this set and shown that {its} Hausdorff dimension is almost surely given by
\begin{equation*}
    \dim \Lambda = \begin{cases}
        \frac{1}{2}(1-\sqrt{1- 8\beta}) \quad &\text{for } \quad 0<\beta \leq 1/8, \\
        1 & \text{for } \quad \beta>1/8.
    \end{cases}
\end{equation*}
Note the discontinuity at $\beta =1/8$.
Further, they have shown that for $\beta>1/8$ we actually have $\Lambda =\partial \DD$ almost surely. The threshold $1/8$ is unsurprisingly also the threshold for recurrence/transience, and at $\beta=1/8$ the process is transient. In his recent article about hyperbolic BBM \cite{woess_notes_2024}, Woess asks several questions about the relationship between $\Lambda$ and $\mu_\infty$, in particular if the dimensions of $\mu_\infty$ and $\Lambda$ agree. We answer these questions.

\begin{thm}\label{thm:main:1}
    The Hausdorff dimension of $\mu_\infty$ is almost surely given by
    \begin{equation*}
        \dim \mu_\infty = \begin{cases}
            2\beta \quad & \text{for} \quad 0 < \beta < 1/2, \\
            1 & \text{for} \quad \beta \geq 1/2. 
        \end{cases}
    \end{equation*}
    Consequently, $\dim \mu_\infty \neq \dim \Lambda$ for $\beta<1/2$.
\end{thm}

Note that this quantity is continuous in $\beta$ and that the threshold $\beta =1/8$ does not appear here. This is quite surprising given that $1/8$ is the threshold for local survival. Also note that $\lim_{\beta \to 0} \frac{\dim \Lambda}{\dim \mu_\infty} =1$. See Figure \ref{fig:dim_plot} for a plot of $\dim \mu_\infty$ compared to $\dim \Lambda$. 
We also give some more quantitative statements about the nature of $\mu_\infty$. Call
\begin{equation*}
    \theta \mapsto \mu_\infty([0,\theta]), \quad \theta \in [0,2\pi],
\end{equation*}
the (random) cumulative distribution function of $\mu_\infty$ where $[0,\theta]$ denotes the arc segment of $\partial\DD$ with angles between $0$ and $\theta$. 

\begin{thm}\label{thm:main:2}
    Almost surely the following statements hold:
    \begin{enumerate}[label=(\roman*)]
        \item For any $\beta >0$, $\mu_\infty$ is purely non-atomic.
        \item The (random) cumulative distribution function of $\mu_\infty$ is $\gamma$--Hölder--continuous for every exponent $\gamma < (1/2)\wedge(\beta/6)$.
        \item For $\beta > 1/2$, $\mu_\infty$ has a density with respect the the Lebesgue measure on $\partial \DD$.
    \end{enumerate}
\end{thm}

\begin{figure}[bth]
    \centering
    \includegraphics[width=0.6\linewidth]{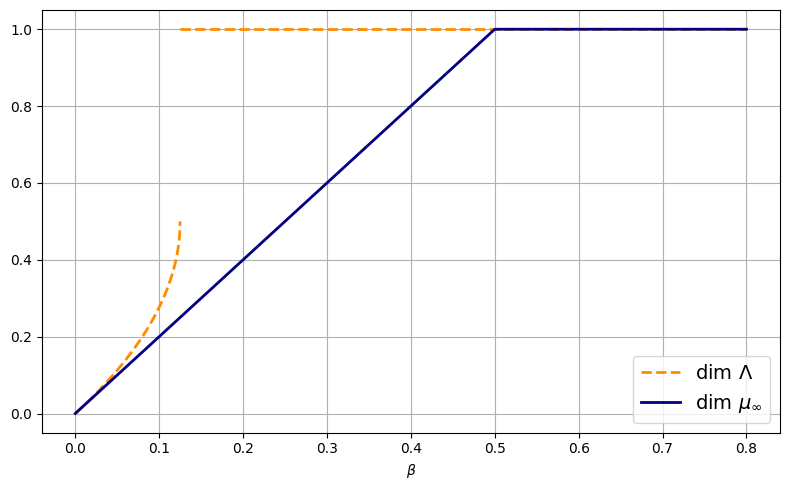}
    \caption{A plot of $\dim \mu_\infty$ and $\dim \Lambda$ as functions of $\beta$.}
    \label{fig:dim_plot}
\end{figure}

In the case $\beta =1/2$ we believe that $\mu_\infty$ should not admit a Lebesgue density almost surely though we do not prove this. 
%This is because we believe Proposition \ref{prop:2nd_moment} to be sharp, that is we cannot achieve the exponent $\eps^2$. 
The bound on the Hölder exponent $\gamma$ in \ref{thm:main:2} (ii) is not sharp, we believe it should hold for any $\gamma < 1 \wedge (2\beta)$.

Theorems \ref{thm:main:1} and \ref{thm:main:2} also partially answer some of the questions posed by Candellero and Hutchcroft \cite[Problems 4.2, 4.3]{candellero_boundary_2023}. In particular, they ask about the behaviour of $\mu_\infty$ for branching random walks in hyperbolic space. While they pose their questions about branching random walks in discrete time and discrete space, this should not change the overall behaviour. 

The main idea behind the proofs of Theorems \ref{thm:main:1} and \ref{thm:main:2} is that $\mu_\infty$ is determined by typical particles. In this context{,} these are particles {for which} $Y_u(t)\approx e^{-t/2}$. The structure of the paper follows this idea. In Section \ref{sec:typical_particles} we rigorously define what it means for a particle to be typical and we compute the Hausdorff dimension of the accumulation set of typical particles. In Section \ref{sec:from_typical_to_empirical} we show that indeed $\mu_\infty$ is determined by typical particles, we prove the upper bound of Theorem \ref{thm:main:1} and a sketch the lower bound. In Section \ref{sec:more_properties} we show Theorem \ref{sec:more_properties} by computing the expected moments of $\mu(I)$ for intervals $I$. As a corollary, we obtain the lower bound for Theorem \ref{thm:main:1}. Lastly we discuss some open questions in Section \ref{sec:open_questions}, in particular we discuss what should happen if you {add} a repulsive or attractive drift towards the origin, and we pose a conjecture regarding branching random walks on hyperbolic groups. 

\section{Typical particles}\label{sec:typical_particles}

We work in $\HH$.
We start by looking at \emph{typical} particles and their accumulation set on the boundary.
First, by a slight abuse of notation we let $\mathcal{N}(\infty)$ be the rays of the genealogical tree of the branching Brownian motion. This means we think of $u\in \mathcal{N}(\infty)$ as a particle with infinite trajectory $(X_u(t),Y_u(t))_{t \geq 0}$ where for fixed $t$ we let $(X_u(t),Y_u(t))$ be the position of the unique ancestor of $u$ in $\mathcal{N}(t)$.

Define the set of typical particles {to be}
\begin{equation}\label{eq:def:typical_particles}
    \mathcal{T}(K) = \left\{ u\in \mathcal{N}(\infty): \forall t\geq K: \log Y_u(t) +t/2 \in [-t^{2/3}, t^{2/3}]\right\},
\end{equation}
where $K>0$ is a parameter. We also consider the typical particles at time $t$,
\begin{equation*}
    \mathcal{T}_t(K) = \left\{u\in \mathcal{N}(t):\exists v \in \mathcal{T}(K) \ \text{such that } u \preceq v\right\}.
\end{equation*}
Note that $\mathcal{T}_t(K)$ is not measurable {with respect to} the natural filtration $(\mathcal{F}_t)_{t\geq 0}$ of the BBM. Let $u\in \mathcal{T}(K)$. Then we necessarily have that $Y_u(t) \to 0$ as $t \to \infty$ and hence $(X_u(t),Y_u(t))$ converges almost surely to a random point $(X_u(\infty), 0) \in \partial \HH$. Let
\begin{equation}\label{eq:def:typical_acc}
    \Upsilon(K)= \left\{ X_u(\infty), u\in \mathcal{T}(K) \right\}
\end{equation}
{be} the accumulation set of typical particles on the boundary. The goal of this section is to determine the Hausdorff dimension of this set. 

\begin{prop}\label{prop:dim_typical}
    For any $\beta>0$ and any $K>0$, $\dim \Upsilon(K) =2\beta \wedge 1$ almost surely on the event that $\Upsilon(K)$ in non-empty. 
\end{prop}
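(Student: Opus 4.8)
The plan is to treat $\Upsilon(K)$ as a random self-similar set carried by the genealogy of the typical population, and to pin down its dimension by the standard two-sided scheme: a first-moment covering bound for the upper bound, and a second-moment energy bound (Frostman, see \cite{falconer_fractal_2014}) for the lower bound. The geometric input is the same in both directions. Writing $\log Y_u(s) + s/2 = B_u(s)$ for the Brownian motion driving the height of particle $u$, the typicality constraint \eqref{eq:def:typical_particles} says precisely that $B_u$ stays in the slowly growing tube $[-s^{2/3},s^{2/3}]$. In this regime $Y_u(s)^2 = e^{-s + 2B_u(s)} = e^{-s + o(s)}$, so the conditional quadratic variation of the boundary coordinate after a time $\tau$ is $\int_\tau^\infty Y_u(s)^2\, ds = e^{-\tau + o(\tau)}$. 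Consequently, if $u$ and $v$ are two typical particles whose most recent common ancestor lives at time $\tau$, then, conditionally on $\tau$, the boundary displacement $X_u(\infty)-X_v(\infty)$ is approximately centred Gaussian with standard deviation of order $e^{-\tau/2}$, up to the subexponential factor $e^{o(\tau)}$. I would record this as a displacement lemma with matching upper and lower tail estimates; obtaining the lower tail (that $|X_u(\infty)-X_v(\infty)|$ is not atypically small, i.e. that the rescaled difference has a bounded density near the origin) is the delicate point, and I expect it to be the main obstacle.

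For the upper bound, fix $s > 2\beta$ and $t$ large. Each typical particle $u \in \mathcal{T}_t(K)$ generates an interval $I_u \subseteq \partial\HH$ containing the boundary limits of all of its typical descendants, and by the displacement estimate together with a maximal inequality and a union bound over descendants, $\diam I_u \le e^{-t/2 + o(t)}$. Since $\Upsilon(K) \subseteq \bigcup_{u \in \mathcal{T}_t(K)} I_u$ and $\mathcal{T}_t(K) \subseteq \mathcal{N}(t)$, conditioning on $\mathcal{F}_t$ and applying the many--to--one formula \eqref{eq:many-to-one} gives
\[ \EE \sum_{u \in \mathcal{T}_t(K)} (\diam I_u)^s \le e^{(\beta - s/2 + o(1))t} \longrightarrow 0, \]
so $\mathcal{H}^s(\Upsilon(K)) = 0$ almost surely and $\dim \Upsilon(K) \le 2\beta$; combined with the trivial bound $\dim \Upsilon(K) \le 1$ this yields the upper bound $2\beta \wedge 1$.

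For the lower bound, fix $s < 2\beta \wedge 1$ and build a Frostman measure $\nu$ on $\Upsilon(K)$, for instance as the weak limit of the normalised empirical measures of typical particles; existence of a nontrivial limit, with $\EE[\nu(\partial\HH)]$ bounded below on survival, follows from the additive martingale associated with the typical population. The expected $s$-energy is controlled by the many--to--two formula: a uniformly chosen pair of particles has most recent common ancestor at a time $\tau$ whose law, after normalising by the number of pairs, converges to an exponential of rate $\beta$, and by the displacement lemma $\EE[\,|X_u(\infty)-X_v(\infty)|^{-s}\mid\tau\,] \le e^{(s/2 + o(1))\tau}$ (finite because $s<1$). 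Hence
\[ \EE[\mathcal{E}_s(\nu)] \lesssim \int_0^\infty \beta\, e^{-\beta\tau}\, e^{(s/2 + o(1))\tau}\, d\tau < \infty \quad\text{since } s < 2\beta, \]
so $\mathcal{E}_s(\nu) < \infty$ almost surely while $\nu \ne 0$ with positive probability; Frostman's lemma then gives $\dim \Upsilon(K) \ge s$ with positive probability. Letting $s \uparrow 2\beta \wedge 1$ and upgrading to an almost-sure statement on $\{\Upsilon(K)\ne\emptyset\}$ by the usual branching argument — decomposing at the first split into independent subtrees, each realising dimension $\ge s$ with the same positive probability, so that $\dim\Upsilon(K)$, being the maximum over subtrees, attains $\ge s$ almost surely on survival — completes the proof.

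The two genuine technical points, both funnelled through the displacement lemma, are: (i) the lower tail / density control of $X_u(\infty)-X_v(\infty)$ needed for finiteness of the conditional $(-s)$-moment; and (ii) handling the fact, flagged after \eqref{eq:def:typical_particles}, that $\mathcal{T}_t(K)$ is not $\mathcal{F}_t$-measurable. For (ii) I would reformulate typicality as the tube constraint $|B_u(s)|\le s^{2/3}$ and work with the branching Brownian motion absorbed at these barriers; since the tube grows faster than the diffusive scale $s^{1/2}$, conditioning on staying inside it perturbs all the relevant probabilities only by subexponential factors, so the exponential rates $e^{\beta t}$ and $e^{-\beta\tau}$ entering the first- and second-moment estimates are unaffected.
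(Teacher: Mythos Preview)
Your overall scheme---first-moment covering for the upper bound, Frostman energy for the lower bound---matches the paper's, and the upper bound is essentially Lemmas~\ref{lemma:diameter} and~\ref{lemma:upper_bound_simplified} (the paper makes your ``maximal inequality and union bound'' explicit by first proving $\EE[\diam\Upsilon(K)]<\infty$ for $\beta<1/2$ and then using self-similarity to bound $\EE[\vert I_u^t\vert^\eta\mid Y_u(t)]\le C\,Y_u(t)^\eta$).

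The one substantive difference is the Frostman measure in the lower bound. The paper does \emph{not} use the empirical measure; instead it takes $\nu$ to be the law of $X_U(\infty)$ where $U$ is sampled by uniform descent through the typical tree $(\mathcal{T}_{nK}(K))_n$. With two conditionally independent copies $U,U'$, the separation time $\tau$ satisfies $\PP(\tau>n\mid\mathcal{T}(K),(U_j))=\prod_{j\le n}N_j^{-1}$, and Lemma~\ref{lemma:typical_growth}---which is exactly your point~(ii) made precise---gives $(\prod_{j\le n}N_j)^{1/n}\to e^{\beta K}$ almost surely on survival. The conditional energy $\EE[\vert X_U(\infty)-X_{U'}(\infty)\vert^{-\eta}\mid\mathcal{T}(K)]$ is then finite almost surely on $\{\Upsilon(K)\neq\emptyset\}$ for every $\eta<2\beta\wedge1$, so Frostman applies pathwise and no positive-probability-to-almost-sure upgrade is needed. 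Your branching upgrade would work too, but, as you implicitly acknowledge, the subtrees after a split are not literal i.i.d.\ copies because the tube $[-s^{2/3},s^{2/3}]$ is anchored at absolute time; the paper's spine construction sidesteps this. Your expected-energy computation via many--to--two is essentially what the paper carries out later in Section~\ref{sec:more_properties} for $\mu_\infty^K$, where it is used to lower-bound $\dim\supp\mu_\infty$ rather than $\dim\Upsilon(K)$.
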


We show this proposition in two steps{:} Lemma \ref{lemma:upper_bound_simplified} for the upper bound and Lemma \ref{lemma:lower_bound_simplified} for the lower bound. In the following, we think of $\Upsilon(K)$ as a random subset of $\RR$ with the usual Euclidean metric.

\begin{lem}\label{lemma:diameter}
    For any $\beta<1/2$ and any $K>0$, there is $C<\infty$ such that $\EE[\diam \Upsilon (K)]\leq C$. 
\end{lem}

\begin{proof}
    Let $M_t = \sup_{u \in \mathcal{T}_t(K)} X_u(t)$ be the maximal displacement of a typical particle in the $x$--direction at time $t$. Here we use the convention $\sup \emptyset =0$ as there is nothing to show in the case when $\Upsilon(K)=\emptyset$. It suffices to show that $\EE[\limsup_{t \to \infty}M_t]<\infty$.

    Let $t\geq K$ and choose $n\in \NN_0$ such that $t\in K+[n,n+1)$. By a telescoping sum
    \begin{align}
        M_{t} &= M_K + \left(\sum_{j=0}^{n-1} (M_{K+j+1}-M_{K+j}) \right)+ (M_t-M_{K+n}). \nonumber
    \end{align}
    For each summand, we bound the difference between the maxima by the maximal positive increment at an intermediate time. This means that
    \begin{align}
         (M_{K+j+1}-M_{K+j}) &\leq \sup_{u\in \mathcal{T}_{K+j+1}(K)}\sup_{s\in[0,1]} \left(X_u(K+j+s)-X_u(K+j)\right) \nonumber\\
         &\leq \sum_{u\in \mathcal{T}_{K+j+1}(K)}\sup_{s\in[0,1]} \left(X_u(K+j+s)-X_u(K+j)\right), \nonumber
    \end{align}
    where we also used a union bound. This estimate is also true for the last summand,
    \begin{align*}
         (M_{t}-M_{K+n}) &\leq \sum_{u\in \mathcal{T}_{t}(K)}\sup_{s\in[0,t-\lfloor t\rfloor]} \left(X_u(\lfloor t\rfloor +s)-X_u(\lfloor t\rfloor)\right)\\
         &\leq \sum_{u\in \mathcal{T}_{K+n+1}(K)}\sup_{s\in[0,1]} \left(X_u(\lfloor t\rfloor+s)-X_u(\lfloor t\rfloor)\right).
    \end{align*}
    We therefore have that
    \begin{align}
        M_t &\leq M_K + \sum_{j=0}^{n} \left(\sum_{u\in \mathcal{T}_{K+j+1}(K)}\sup_{s\in[0,1]} \left(X_u(K+j+s)-X_u(K+j)\right)\right)\nonumber \\
        &\leq M_K+\sum_{j=0}^\infty \left(\sum_{u\in \mathcal{T}_{K+j+1}(K)}\sup_{s\in[0,1]} \left(X_u(K+j+s)-X_u(K+j)\right)\right), \label{epp}
    \end{align}
    note that all summands in the infinite series are non-negative and that this bound is uniform in $t\geq K$. 

    We estimate the expected value of these summands. 
    For fixed $u$ and $j$, the process \\ $\left(X_u(K+j+s)-X_u(K+j)\right)_{s\in [0,1]}$ is a Brownian motion with diffusivity $Y_u(K+j+s)$. Hence, by conditioning on $(Y_u(K+j+s))_{s\in [0,1]}$ and by the reflection principle we have the distributional equality
    \begin{align*}
        &\EE\left[\sup_{s\in[0,1]} \left(X_u(K+j+s)-X_u(K+j)\right)\middle\vert (Y_u(K+j+s))_{s\in [0,1]}\right] \\
        & \qquad \qquad \overset{d}{=} \EE\bigg[\big\vert X_u(K+j+1)-X_u(K+j)\big\vert\bigg\vert (Y_u(K+j+s))_{s\in [0,1]}\bigg].
    \end{align*}
    Now $(X_u(K+j+1)-X_u(K+j))$ is Gaussian with mean zero and with variance
    \begin{equation*}
        \int_{K+j}^{K+j+1} Y_u(s)^2ds \leq c\exp\left(-j(1+o_j(1)\right),
    \end{equation*}
    where we used that $u$ is a typical particle and where $c=c(K)$ is a constant. Therefore we get for the conditional expectation that 
    \begin{align*}
        &\EE\left[\sup_{s\in[0,1]} \left(X_u(K+j+s)-X_u(K+j)\right) \middle\vert (Y_u(K+j+s))_{s\in [0,1]}\right] \\
        &\hspace{2cm} = \EE\bigg[\big\vert X_u(K+j+1)-X_u(K+j)\big\vert \bigg\vert (Y_u(K+j+s))_{s\in [0,1]} \bigg] \\
        &\hspace{2cm} \leq \sqrt{\frac{2c}{\pi}}\exp\left(-\frac{j}{2}(1+o_j(1))\right).
    \end{align*}
    This means we also have by the tower property of conditional expectation
    \begin{align}
        &\EE\left[\sum_{u\in \mathcal{T}_{K+j+1}(K)}\sup_{s\in[0,1]} \left(X_u(K+j+s)-X_u(K+j)\right)\right] \nonumber\\
        & \hspace{0.5cm} = \EE\left[\sum_{u\in \mathcal{T}_{K+j+1}(K)} \EE \left[\sup_{s\in[0,1]} \left(X_u(K+j+s)-X_u(K+j)\right)\middle\vert (Y_u(K+j+s))_{s\in [0,1]} \right]\right] \nonumber \\
        & \hspace{0.5cm}\leq \sqrt{\frac{2c}{\pi}}\EE\left[\vert \mathcal{T}_{K+j+1}(K)\vert \right]\exp\left(-\frac{j}{2}(1+o_j(1))\right) \nonumber \\
        & \hspace{0.5cm}\leq \sqrt{\frac{2c}{\pi}}\exp\left(\beta(K+j +1)\right) \exp\left(-\frac{j}{2}(1+o_j(1))\right),
    \end{align}
    where we also used the bound 
    $$\EE\left[\vert \mathcal{T}_{K+j+1}(K)\vert \right]\leq \EE\left[\vert \mathcal{N}(K+j+1)\vert \right]=\exp\left(\beta(K+j +1)\right).$$
    We apply this to \eqref{epp} to see that uniformly in $t$
    \begin{align*}
        \EE [M_t]&\leq \EE [M_K]+\sqrt{\frac{2c}{\pi}}\sum_{j=0}^\infty \exp\left(\beta(K+j +1)\right) \exp\left(-\frac{j}{2}(1+o_j(1))\right),
    \end{align*}
    this sum converges because we assumed that $\beta<1/2$. 
    Further we have by the many--to--one formula \eqref{eq:many-to-one} that
    \begin{equation*}
        \EE [M_K] \leq \EE\left[ \sum_{u\in \mathcal{N}(K)}\vert X_u(K)\vert\right]=e^{\beta K} \EE[{\vert X_K \vert}] <\infty.
    \end{equation*}
    Looking back at \eqref{epp}, the monotone convergence theorem now implies that the right-hand side is almost surely finite. Because this is true for any $t\geq K$, this implies that $\EE[\limsup_{t \to \infty} M_t]<\infty$ by Fatou's Lemma.
\end{proof}

% \begin{remark}
%     The same proof strategy can also show that $\EE[(\diam \Upsilon(K))^k]<\infty$ for any $k \in \NN$. 
% \end{remark}

Clearly $\dim \Upsilon(K)\leq 1$. Therefore we need to show an upper bound for $\dim \Upsilon(K)$ only in the case $\beta<1/2$.

\begin{lem}\label{lemma:upper_bound_simplified}
    For $\beta <1/2$ and any $K>0$, $\dim \Upsilon(K) \leq 2\beta$ almost surely. 
\end{lem}

\begin{proof}
    We follow a similar idea to \cite[Proposition 11]{lalley_hyperbolic_1997}. For a particle $u\in \mathcal{T}_t(K)$, consider
    \begin{equation*}
        \Upsilon_u^t(K)=\left\{X_v(\infty):v \in \mathcal{T}(K) \text{ with } u \preceq v \right\},
    \end{equation*}
    that is{,} the {set of} limits in $\partial \HH$ of all typical descendents of $u$. Naturally, we have for any $t$ that 
    \begin{equation}\label{eq:decomposition_ups}
        \Upsilon(K) = \bigcup_{u \in \mathcal{T}_t(K)}\Upsilon_u^t(K). 
    \end{equation}
    For $u\neq v \in \mathcal{T}_t(K)$, $\Upsilon_u^t(K)$ and $\Upsilon_v^t(K)$ are independent conditional on $(X_u(t),Y_u(t))$ and $(X_v(t), Y_v(t))$.
    Let $I_u^t \subset \partial \HH$ be the smallest closed interval that contains $\Upsilon_u^t(K)$. By isometries of $\HH$, $\Upsilon_u^t(K)$ is contained in an independent copy of $\Upsilon(K)$ scaled by $Y_u(t)$, provided that {$t\geq K$}. In particular{,} we have by Lemma \ref{lemma:diameter} that for any $0<\eta \leq 1$,
    \begin{equation}\label{eq:pp}
        \EE\big[ \vert  I_u^t \vert^\eta \big\vert Y_u(t) \big] \leq Y_u(t)^\eta \EE\left[(\diam \Upsilon(K))^\eta\right] \leq (C+1) Y_u(t)^\eta.
    \end{equation}
    {Let us return} to \eqref{eq:decomposition_ups}. This decomposition implies that $\{I_u^t\}_{u \in \mathcal{T}_t(K)}$ is an interval cover for $\Upsilon(K)$. Let $\eps> 0$ such that $2\beta+\eps<1$. We apply \eqref{eq:pp} {to get}
    \begin{align*}
        \EE\left[\sum_{u\in \mathcal{T}_t(K)}\vert I_u^t\vert^{2\beta + \eps}\right] &\leq (C+1)\EE\left[\sum_{u\in \mathcal{T}_t(K)}Y_u(t)^{2\beta + \eps}\right]  \\
         &\leq (C+1) \EE\left[\vert \mathcal{T}_t(K)\vert \right] \exp\left(-(2\beta+\eps)(t/2)(1+o_t(1)\right),
    \end{align*}
    where we also used that $Y_u(t) =\exp(-(t/2)(1+o_t(1))$ for typical particles. Further we can bound $\EE\left[\vert \mathcal{T}_t(K)\vert \right] \leq \EE[\vert \mathcal{N}(t)\vert]=\exp(\beta t)$, therefore
    \begin{align}
        \EE\left[\sum_{u\in \mathcal{T}_t(K)}\vert I_u^t\vert^{2\beta + \eps}\right]  &\leq (C+1) \exp\left(\beta t - \frac{2\beta+\eps}{2}t(1+o_t(1))\right) \nonumber\\
        &= (C+1) \exp\left(- \frac{\eps}{2}t(1+o_t(1))\right).
        \label{eq:bounding_intervals}
    \end{align}
    First, this shows that 
    \begin{equation*}
        \sup_{u \in \mathcal{T}_t(K)} \vert I_u^t\vert \xrightarrow{t\to \infty} 0,
    \end{equation*}
    almost surely. 
    Next, by Fatou's Lemma and applying \eqref{eq:bounding_intervals} again, {we estimate for $2\beta+\eps$--dimensional Hausdorff measure $\mathcal{H}^{2\beta+\eps}$ of $\Upsilon(K)$},
    \begin{align*}
        \EE\left[\mathcal{H}^{2\beta+\eps}(\Upsilon(K))\right] \leq \EE\left[\liminf_{t \to \infty}\sum_{u\in \mathcal{T}_t(K)}\vert I_u^t\vert^{2\beta + \eps}\right] \leq \liminf_{t \to \infty} \EE\left[\sum_{u\in \mathcal{T}_t(K)}\vert I_u^t\vert^{2\beta + \eps}\right]=0.
    \end{align*}
    As this is true for any $\eps>0$ small enough, we obtain that $\dim \Upsilon(K) \leq 2\beta$ almost surely.
\end{proof}

\begin{lem}\label{lemma:lower_bound_simplified}
    For $\beta>0$ and any $K>0$, $\dim \Upsilon(K) \geq 2\beta \wedge 1$ almost surely on the event that $\Upsilon(K)$ is non-empty.
\end{lem}

A common tool to show lower bounds for Hausdorff dimensions is Frostman's Lemma, {a corollary of which we state below as Lemma \ref{lemma:frostman}}. See \cite[Theorem 4.13]{falconer_fractal_2014} for a reference. 

\begin{lem}\label{lemma:frostman}
    Let $A$ be a compact subset of Euclidean space. Assume that there exists a probability measure $\nu$ on $A$ such that
    \begin{equation*}
        \iint_{A\times A} \vert x-y\vert^{-\eta} \nu(dx)\nu(dy) < \infty,
    \end{equation*}
    where $\eta >0$. Then the Hausdorff dimension of $A$ is at least $\eta$.
\end{lem}

\begin{proof}[Proof of Lemma \ref{lemma:lower_bound_simplified}.]
    Throughout the proof, we work on the event that $\mathcal{T}(K)$ is non-empty. 
    To use Frostman's Lemma, we need to define a probability distribution on $\Upsilon(K)$. We do this by defining a sequence $(U_n)_{n\in \NN_0}$ of random variables such that $U_n \in \mathcal{T}_{nK}(K)$.
    \begin{enumerate}
        \item Let $U_0 = u$, where $u\in \mathcal{T}_0(K)$ is the unique initial particle. 
        \item Given $U_{n-1}$, let $U_n$ be a uniform choice from $\{u\in \mathcal{T}_{nK}(t): U_{n-1}\preceq u\}$.
    \end{enumerate}
    Let $U=\lim_{n \to \infty }U_n$ be the natural limit in $\mathcal{T}(K)$ and let $\nu$ be the distribution of $X_U(\infty)$. 

    Now let $(U'_n)_{n}$ be a copy of $(U_n)_{n}$, independent conditional on $\mathcal{T}(K)$. Let $\tau = \inf\{n: U_n\neq U_{n}'\}$, the first time that $U_n$ and $U_n'$ are different. Conditional on $\tau = n$, $X_U(\infty)-X_{U'}(\infty)$ is Gaussian with mean $0$ and variance at least
    \begin{equation*}
        \Var\left(X_U(\infty)-X_{U'}(\infty)\middle\vert \tau =n\right) \geq \int_{nK}^\infty Y_U(s)^2 +Y_{U'}(s)^2 ds \geq 2 \exp\left(-nK(1+o_n(1) )\right),
    \end{equation*}
    where we used that $Y_u(s) \geq \exp(-(s/2)(1+o_1(s)))$ for typical particles for $s\geq K$. Let $0<\eta <1$, for any Gaussian $Z$ with mean $0$ and variance $\sigma^2$ we have by Gaussian scaling that
    \begin{equation*}
        \EE[Z^{-\eta}] = \sigma^{-\eta} \EE[(\sigma^{-1}Z)^{-\eta}] = c_0\sigma^{-\eta},
    \end{equation*}
    for a universal constant $c_0=c_0(\eta)$. In particular this implies that 
    \begin{align}
        \EE\left[\vert X_U(\infty) - X_{U'}(\infty)\vert^{-\eta}\middle\vert \mathcal{T}(K),\tau =n \right]\leq c \exp(\eta nK/2),\label{eq:ce},
    \end{align}
    for a slightly different constant $c$.
    
    Next, we need to understand the distribution of $\tau$. Conditional on $\mathcal{T}$ and $(U_n)_n$ we have that
    \begin{equation*}
        \PP\left(\tau>k \middle\vert \mathcal{T}, (U_n)_n\right) =\PP\left(\forall i\leq k: U_j'=U_j \middle\vert \mathcal{T}, (U_n)_n\right)= \prod_{j=1}^n \frac{1}{N_j},
    \end{equation*}
    where $N_j = \# \left\{ u\in \mathcal{T}_{jK}:U_{j-1} \preceq u\right\}$, the number of descendants of $U_{j-1}$ in $\mathcal{T}_{jk}(K)$. {By Lemma \ref{lemma:typical_growth} below}
    \begin{equation*}
        \lim_{n \to \infty}\left(\prod_{j=1}^n \frac{1}{N_j}\right)^{1/n} = \exp(-\beta K),
    \end{equation*}
    almost surely on the event that $\mathcal{T}$ is non-empty.  Combining this with \eqref{eq:ce} yields
    \begin{align*}
        &\EE\left[\vert X_U(\infty) - X_{U'}(\infty)\vert^{-\eta}\middle\vert \mathcal{T}(K) \right] \\
        & \hspace{3cm}\leq \sum_{n=1}^\infty \EE\left[\vert X_U(\infty) - X_{U'}(\infty)\vert^{-\eta}\middle\vert \mathcal{T}(K) , \tau=n\right]\PP\left(\tau\geq n \middle\vert \mathcal{T}(K)\right)\\
        & \hspace{3cm} \leq c\sum_{n=1}^\infty \exp(\eta nK/2)\exp(-\beta K n(1+o_n(1))\\
        & \hspace{3cm} < \infty,
    \end{align*}
    for some $c>0$ and where in the last step we used that $\eta <2 \beta$. By Frostman's Lemma, Lemma \ref{lemma:frostman}, this now implies that $\Upsilon(K)$ has dimension at least $\eta$ for any $\eta$ that satisfies $\eta <2\beta$ and $\eta<1$.
\end{proof}

\begin{lem}\label{lemma:typical_growth}
    In the setting on the previous proof, $$\lim_{n \to \infty}\left(\prod_{j=1}^n \frac{1}{N_j}\right)^{1/n} = \exp(-\beta K),$$ almost surely on the event that $\mathcal{T}$ is non-empty. 
\end{lem}

\begin{proof}
    We sketch a proof of this fact, this proof can be made rigorous by carefully applying the strong law of large numbers. We provide only a sketch because Lemma \ref{lemma:lower_bound_simplified} will not be used to show the lower bound of Theorem \ref{thm:main:1}. We comment on this at the end of Section \ref{sec:from_typical_to_empirical}.
    
    The key idea is that we can think of $\mathcal{T}(K)$ as a branching Brownian motion with space and time dependent branching rate. Let
    \begin{equation*}
        \mathcal{W}(K) = \left\{(x,y,t) \in \HH \times [0,\infty): \forall t\geq K: \log(y)+t/2\in [-t^{2/3},t^{2/3}]\right\},
    \end{equation*}
    the space-time envelope of the definition of $\mathcal{T}(K)$. We also let 
    \begin{equation*}
        \phi(x,y,t)=\PP_{(x,y,t)}\left(\forall s\geq t: (X_s,Y_s, s)\in \mathcal{W}(K)\right),
    \end{equation*}
    the probability that a hyperbolic Brownian motion started from $(x,y)$ at time $t$ stays in $\mathcal{W}(K)$ forever. Importantly, $\phi(0,1,0)> 0$, that is with positive probability the initial particle stays in $\mathcal{W}(K)$. We now describe a new BBM:
    \begin{enumerate}
        \item At time $0$, we start with one particle at $(0,1)$. 
        \item All particles move as independent hyperbolic Brownian motions conditioned to stay in $\mathcal{W}(K)$.
        \item Particles branch into two at rate $\beta \phi(x,y,t)$.
    \end{enumerate}
    One can show that this modified BBM has the {same} law as the homogenous hyperbolic BBM restricted to $\mathcal{T}(K)$. Now look at a marked particle in the modified BBM, that is the initial particle is marked and{,} when it splits{,} the mark follows one of the offspring particles chosen uniformly. This is similar to the construction of $(U_n)_n$ in the previous proof. Let $(X_t^*,Y_t^*)_{t\geq 0}$ be the path of the marked particle. In fact, we have that 
    \begin{equation*}
        \lim_{n \to \infty}\left(\prod_{j=1}^n \frac{1}{N_j}\right)^{1/n} = \lim_{t \to \infty} \exp\left(-\frac{1}{t}\int_0^{Kt} \beta \phi(X_s^*,Y_s^*,s)ds\right) = \exp\left(-K\beta\right).
    \end{equation*}
    The reason for this is that along the marked path, we have that $\phi(X_s^*,Y_s^*,s)\to 1$ almost surely. This is because $\log(Y_s^*)+s/2$ will be of order $ s^{1/2}\ll s^{2/3}$ so it is very likely that for large $s$ a particle started from $(X_s^*,Y_s^*,s)$ will stay in $\mathcal{W}$ forever. 
\end{proof}

\section{From typical particles to empirical measure} \label{sec:from_typical_to_empirical}

In the previous section we analysed the accumulation set of typical particles {whose} definition we recall from \eqref{eq:def:typical_particles}. We slightly modify this. {F}or any $K>0$ and $t\geq K$, let
\begin{equation*}
        \mathcal{T}_t^\leq (K)= \left\{ u\in \mathcal{N}(t): \forall s\in [K,t]: \log Y_u(s)+s/2 \in [-s^{2/3}, s^{2/3}]\right\}.
\end{equation*}
The advantage of this modification is that $\mathcal{T}_t^\leq (K)$ is $\mathcal{F}_t$--measurable where $(\mathcal{F}_t)_{t\geq 0}$ is the natural filtration of the BBM. Similar to $\mu_t$, we define the empirical measure of typical particles at time $t$,
\begin{equation*}
    \mu_t^K = \frac{1}{\vert \mathcal N (t)\vert}\sum_{u \in \mathcal{T}_t^\leq (K)}\delta_{(X_u(t),Y_u(t))}.
\end{equation*}
Note that we chose to normalise this by $\vert \mathcal N (t)\vert$ which means that $\mu_t^K$ is a sub-probability measure. The following proposition states that $\mu_\infty$ is determined by typical particles. In some sense this is a refinement of \cite[Theorem 6.13]{woess_notes_2024} which states that a typical sample of $\mu_t$ moves at velocity $1/2$ in the hyperbolic metric.

\begin{prop}\label{prop:conv_typical_to_mu}
    Almost surely, there exists a family of sub-probability measures $(\mu^K_\infty)_{K>0}$ such that for every $K>0$
    \begin{equation*}
        \mu_t^K \rightarrow \mu_\infty^K,
    \end{equation*}
    weakly, as $t\to \infty$. Furthermore, for $K<K'$ we have $\mu^K \leq \mu^{K'}$ and as $K \to \infty$,
    \begin{equation*}
        \mu_\infty^K \rightarrow \mu_\infty. 
    \end{equation*}
\end{prop}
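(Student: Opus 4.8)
The plan is to prove the three claims in turn — convergence for fixed $K$, monotonicity in $K$, and the limit $K\to\infty$ — with the engine being a supermartingale obtained from the many--to--one formula \eqref{eq:many-to-one}, exploiting that restricting to $\mathcal T_t^\leq(K)$ is nothing but absorbing particles at the boundary of the space--time envelope $\mathcal W(K)$. Fix $K$ and let $h\geq 0$ be a bounded harmonic function for hyperbolic Brownian motion. I would first show that $N_t:=e^{-\beta t}\sum_{u\in\mathcal T_t^\leq(K)}h(X_u(t),Y_u(t))$ is a non-negative supermartingale. For the unrestricted BBM, $e^{-\beta t}\sum_{u\in\mathcal N(t)}h(X_u(t),Y_u(t))$ is a martingale, since harmonicity of $h$ makes each summand a martingale and the branching rate $\beta$ is compensated by $e^{-\beta t}$; passing to $\mathcal T_t^\leq(K)$ only deletes the non-negative contributions of lineages that have left $\mathcal W(K)$, and these deletions are permanent because a descendant of an absorbed particle is itself absorbed, so the drift can only decrease. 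Hence $N_t\to L^{h,K}$ almost surely, and dividing by $e^{-\beta t}\vert\mathcal N(t)\vert\to W$, the strictly positive Yule martingale limit with $\EE W=1$, gives $\langle h,\mu_t^K\rangle\to L^{h,K}/W$. Taking $h\equiv 1$ shows $e^{-\beta t}\vert\mathcal T_t^\leq(K)\vert\to W^K$, so the limit is a sub-probability measure.

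To upgrade this to weak convergence I would argue: (i) every typical particle has $Y_u(t)\leq e^{-t/2+t^{2/3}}\to 0$, so $\langle f,\mu_t^K\rangle\to 0$ for all $f\in C_c(\HH)$, whence any subsequential weak limit on the compactification $\overline{\DD}$ is supported on $\partial\DD$; (ii) solvability of the Dirichlet problem makes every $g\in C(\partial\DD)$ the boundary trace of a bounded harmonic $h\in C(\overline{\DD})$ with $g\geq 0\Rightarrow h\geq 0$; and (iii) applying the supermartingale argument to a countable family of $h$ with traces dense in $C(\partial\DD)$ yields, on a single full-probability event, convergence of $\int g\,d\mu_t^K$ for all $g$. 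Since a measure on $\partial\DD$ is determined by its pairing with $C(\partial\DD)$, all subsequential limits agree and $\mu_t^K\to\mu_\infty^K$ weakly. Monotonicity is then immediate: for $K<K'$ one has $[K',t]\subseteq[K,t]$, hence $\mathcal T_t^\leq(K)\subseteq\mathcal T_t^\leq(K')$ and $\mu_t^K\leq\mu_t^{K'}\leq\mu_t$ for all $t$, and these inequalities pass to the weak limits to give $\mu_\infty^K\leq\mu_\infty^{K'}\leq\mu_\infty$. The uncountable family $(\mu_\infty^K)_{K>0}$ is produced from rational $K$ together with this monotonicity.

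For $K\to\infty$, the $\mu_\infty^K$ increase and are dominated by the probability measure $\mu_\infty$, so they increase setwise to some $\mu_\infty^\infty\leq\mu_\infty$, and it suffices to show the masses match, i.e. $W^K/W\uparrow 1$. Writing $A_t^K:=\vert\mathcal N(t)\vert-\vert\mathcal T_t^\leq(K)\vert$ for the atypical count, the many--to--one formula together with $\log Y_s+s/2=B_s$ gives $\EE[e^{-\beta t}A_t^K]=\PP(\exists s\in[K,t]\colon\vert B_s\vert>s^{2/3})$, which increases to $\rho(K):=\PP(\exists s\geq K\colon\vert B_s\vert>s^{2/3})$. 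Since $e^{-\beta t}A_t^K\to W-W^K$ almost surely, Fatou's lemma gives $\EE[W-W^K]\leq\rho(K)$, and $\rho(K)\to 0$ by the law of the iterated logarithm (as $\vert B_s\vert\ll s^{2/3}$ eventually). Because $W-W^K$ decreases and is dominated by the integrable $W$, dominated convergence forces $\EE[\lim_K(W-W^K)]=0$, so $W^K\uparrow W$ almost surely and $\mu_\infty^\infty(\partial\DD)=1=\mu_\infty(\partial\DD)$; combined with $\mu_\infty^\infty\leq\mu_\infty$ this yields $\mu_\infty^\infty=\mu_\infty$, and monotone convergence against $C(\partial\DD)$ turns the setwise convergence into weak convergence $\mu_\infty^K\to\mu_\infty$.

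The main obstacle is the first step, and within it two points: justifying rigorously that absorption at the time-dependent boundary $\partial\mathcal W(K)$ turns the additive martingale into a genuine supermartingale (a generator or optional-stopping computation, taking care that the envelope is a moving barrier), and the passage from convergence tested against harmonic functions to honest weak convergence, which leans on both the escape of typical mass to $\partial\DD$ and Dirichlet solvability so that the harmonic functions separate boundary measures. By contrast the $K\to\infty$ step is a soft Fatou argument once the Gaussian estimate $\rho(K)\to 0$ is in hand.
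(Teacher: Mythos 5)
Your proposal is correct and follows essentially the same route as the paper: the non-negative supermartingale $e^{-\beta t}\sum_{u\in\mathcal T_t^\leq(K)}h(X_u(t),Y_u(t))$ for bounded non-negative harmonic $h$, division by the Yule martingale limit $W$, monotonicity of $\mathcal T_t^\leq(K)$ in $K$, and a first-moment (many--to--one) computation showing the total mass tends to $1$ as $K\to\infty$ because $\log Y_s+s/2$ is a standard Brownian motion that eventually stays inside $[-s^{2/3},s^{2/3}]$. Your Fatou argument for the last step and your explicit Dirichlet-problem justification of the passage from harmonic test functions to weak convergence are minor variations on (and in places slightly more detailed than) the paper's argument.
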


\begin{proof}
    Let $h:\HH \to \RR$ be a non-negative, bounded, $\mathcal{C}^2$ function which is harmonic for hyperbolic Brownian motion. That is, for all $(x,y)\in \HH$, we have $\EE_{(x,y)}[h(X_t,Y_t)]=h(x,y)$. 

    It suffices to check weak convergence only on harmonic functions as these are dense in $\mathcal{C}_b(\overline\DD,\RR)$ with respect to the uniform topology. This is because if $h$ is harmonic for the hyperbolic Laplacian, it is also harmonic for the Euclidean Laplacian on $\DD$. The harmonic functions for the Euclidean Laplacian are well understood, for example any $h$ can be written as $\Re(f)$ where $f$ is a holomorphic function.
    This fact is also discussed in the proof of \cite[Theorem 6.7]{woess_notes_2024}.
    
    We define
    \begin{equation*}
        M^K_h(t) =  \frac{\vert\mathcal{N}(t) \vert}{e^{\beta t}}\left\langle h, \mu_t^K\right\rangle = \frac{1}{e^{\beta t}}\sum_{u \in \mathcal{T}_t^\leq (K)}h(X_u(t),Y_u(t)). 
    \end{equation*}
    Then $(M_h^K(t))_{t\geq 0}$ is a non-negative supermartingale with respect to the natural filtration of hyperbolic BBM. Indeed,
    \begin{align*}
        \EE\left[M_h^K(t)\middle\vert \mathcal{F}_s\right] &= \frac{1}{e^{\beta s}}
        \hspace{-3pt}
        \sum_{u \in \mathcal{T}_s^\leq (K)}\EE_{(X_u(s),Y_u(s))}\bigg[\frac{1}{e^{\beta(t-s)}}\sum_{\substack{u \in \mathcal{T}_t^\leq (K) \\ u\preceq v}} h(X_v(t), Y_v(t))\bigg\vert \mathcal{F}_s\bigg] \\
        &=\frac{1}{e^{\beta s}}
        \hspace{-3pt}
        \sum_{u \in \mathcal{T}_s^\leq (K)}\EE_{(X_u(s),Y_u(s))}\left[h(X_t,Y_t) \II_{\{\forall r \in [s\vee K,t]:\log( Y_r) +r/2 \in [-r^{2/3},r^{2/3}]\}} \right],
    \end{align*}
    where we used the Markov property and the many--to--one {formula} \eqref{eq:many-to-one}. Next we use that $h\geq 0$ and that $h$ is harmonic,

    \begin{align*}
        \EE\left[M_h^K(t)\middle\vert \mathcal{F}_s\right]&\leq \frac{1}{e^{\beta s}}\sum_{u \in \mathcal{T}_s^\leq (K)}\EE_{(X_u(s),Y_u(s))}\left[h(X_t,Y_t)  \right] = M_h^K(s).
    \end{align*}

     Because $M^K_h(t)$ is a non-negative, uniformly integrable supermartingale (it is bounded by $\|{h}\|_{\infty} e^{-\beta t}\vert\mathcal{N}(t)\vert$ which is uniformly integrable), it converges almost surely and in $L^1$ to a limit, call it $M^K_h(\infty)$. 
     Furthermore, let $W= \lim_{t\to\infty}e^{-\beta t}\vert \mathcal{N}(t)\vert$, where the limit is almost sure and $0<W<\infty$ almost surely. Combining {these limits} gives us the almost sure limit as $t\to\infty$,
     \begin{equation*}
         \lim_{t\to\infty}\left\langle h, \mu_t^K\right\rangle  = W^{-1}M_h^K(\infty). 
     \end{equation*}
     Because $h$ is arbitrary, this implies that there is $\mu^K_\infty$ such that {almost surely} $\mu^K_t$ converges weakly to $\mu^K_\infty$. Next, for $K < K'$ and any $h$ we have that $M^K_h(t) \leq M^{K'}_h(t)$ and consequently $M^K_h(\infty) \leq M_h^{K'}(\infty)$ almost surely. This implies that $\mu_\infty^K \leq \mu_\infty^{K'}$.

     Lastly, to show that $\mu_\infty^K \to \mu_\infty$, it suffices to show that 
     \begin{equation*}
         \langle1,\mu_\infty^K\rangle \to 1,
     \end{equation*}
     almost surely as $t \to \infty$. Because $\II(x,y)=1$ for all $(x,y)\in \HH$ is harmonic, this is equivalent to showing that $M^K_\II(\infty) \to W$ as $K \to \infty$. We know that $M^K_\II(\infty) \leq W$, therefore it is enough to show that $\lim_{K \to \infty} \EE[M^K_\II(\infty)] = \EE[W]=1$. By $L_1$--convergence and the many--to--one lemma \eqref{eq:many-to-one} we have
     \begin{align*}
         \lim_{K \to \infty} \EE\left[M^K_\II(\infty)\right] & = \lim_{K \to \infty} \lim_{t \to \infty} \EE\left[M^K_\II(t)\right] \\&
         = \lim_{K \to \infty} \lim_{t \to \infty} \PP\left(\forall s \in [K,t]: \log (Y_s)+s/2 \in [-s^{2/3},s^{2/3}]\right) \\
         &= \lim_{K \to \infty}  \PP\left(\forall s \geq K: \log (Y_s)+s/2 \in [-s^{2/3},s^{2/3}]\right) \\
         &=1,
     \end{align*}
     where we recall that $(\log (Y_s)+s/2 )_{s\geq 0}$ is a standard Brownian motion and hence the {complement of the} last probability decays like $\exp(-cK^{1/3})$.     
\end{proof}

We can combine this with Proposition \ref{prop:dim_typical} to obtain the upper bound in Theorem \ref{thm:main:1}.

\begin{cor}
    For any $\beta<1/2$, we almost surely have that $\dim \mu_\infty\leq 2\beta$.
\end{cor}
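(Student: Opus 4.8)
The plan is to carry the dimension bound for the typical accumulation set through the approximation of Proposition~\ref{prop:conv_typical_to_mu}. Since $\mu_\infty^K\le\mu_\infty^{K'}$ for $K<K'$ and $\mu_\infty^K\to\mu_\infty$ with total masses tending to $1$, two measure-theoretic facts are immediate and I would record them first: $\supp\mu_\infty=\overline{\bigcup_{K>0}\supp\mu_\infty^K}$, and $\mu_\infty^K(B)\to\mu_\infty(B)$ for \emph{every} Borel set $B$ (if convergence failed on some $B$, the inequalities $\mu_\infty^K(B)\le\mu_\infty(B)$ and $\mu_\infty^K(B^c)\le\mu_\infty(B^c)$ together with $\langle1,\mu_\infty^K\rangle\to1$ would give a contradiction). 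Thus $\mu_\infty$ is, in the bulk, governed by the typical particles, and the task is to turn the qualitative statement ``$\mu_\infty$ lives near $\Upsilon(K)$'' into a quantitative cover.

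The geometric input is a covering estimate. For a particle $u$ that lies in the envelope at time $t$, let $I_u^t\subset\partial\HH$ be the smallest closed interval containing the boundary limits $X_v(\infty)$ of all descendants $v\succeq u$ that remain in the envelope for all later times. By the scaling used in Lemma~\ref{lemma:upper_bound_simplified}, conditionally on $Y_u(t)$ the interval $I_u^t$ sits inside $Y_u(t)$ times an independent copy of $\Upsilon(K)$, so Lemma~\ref{lemma:diameter} (with the moment remark following it, via Jensen) gives $\EE[\,|I_u^t|^{s}\mid Y_u(t)\,]\le C\,Y_u(t)^{s}$ for every $s\in(2\beta,1)$. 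Summing over the relevant particles and applying the many--to--one formula \eqref{eq:many-to-one} with the envelope bound $Y_u(t)\le e^{-t/2+t^{2/3}}$ yields
\[
    \EE\Big[\sum_{u}|I_u^t|^{s}\Big]\;\le\;C\,e^{\beta t}\,\EE\big[\,Y_t^{s}\,\II\{\log Y_t+t/2\in[-t^{2/3},t^{2/3}]\}\,\big]\;\le\;C\,e^{(\beta-s/2)t+s t^{2/3}},
\]
which tends to $0$ as $t\to\infty$ precisely because $s>2\beta$. Restricting the sum to the forever-typical particles $\mathcal{T}_t(K)$, the intervals $\{I_u^t\}$ form a closed cover of $\overline{\Upsilon(K)}$ of mesh tending to $0$, so Fatou's Lemma gives $\EE[\mathcal{H}^{s}(\overline{\Upsilon(K)})]=0$ and hence $\dim\overline{\Upsilon(K)}\le 2\beta$ almost surely, upgrading Proposition~\ref{prop:dim_typical} to the closure.

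The main obstacle is to pass from $\dim\overline{\Upsilon(K)}\le2\beta$ for each fixed $K$ to $\dim\supp\mu_\infty\le2\beta$. This is not automatic: by the first paragraph $\supp\mu_\infty=\overline{\bigcup_K\overline{\Upsilon(K)}}$ is the closure of an increasing union of sets of dimension $2\beta$, and passing to the closure can strictly raise Hausdorff dimension. I would resolve both this and the companion question of mass escape ($\supp\mu_\infty^K\subseteq\overline{\Upsilon(K)}$, via the portmanteau bound $\mu_\infty^K(U)\le\liminf_t\mu_t^K(U)$ on open $U$) by a single diagonal cover of $\supp\mu_\infty$ itself: run the display above over \emph{all} particles in the envelope at time $t$, so that the $s$-sum still tends to $0$, and then show that for large $t$ these closed intervals already cover $\supp\mu_\infty$. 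The delicate point --- and the step I expect to be hardest --- is exactly this covering claim: a boundary point $\xi\in\supp\mu_\infty$ must be shown to be approached by genealogical lines that enter the envelope at a bounded time, which amounts to a first/second--moment estimate proving that lines entering the envelope arbitrarily late carry a vanishing fraction of $|\mathcal{N}(t)|$ and so cannot create support mass near $\xi$. Once this mass estimate is in place, $\mathcal{H}^{s}(\supp\mu_\infty)=0$ for all $s\in(2\beta,1)$, and the corollary follows.
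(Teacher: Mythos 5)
Your overall route is the paper's own: reduce $\supp\mu_\infty$ to the typical accumulation sets $\Upsilon(K)$ via Proposition \ref{prop:conv_typical_to_mu}, then use the interval covers behind Lemma \ref{lemma:upper_bound_simplified}. Your first two paragraphs are sound, and the remark that the time-$t$ cover of $\Upsilon(K)$ consists of \emph{finitely many closed} intervals, hence automatically covers $\overline{\Upsilon(K)}$ as well, is a genuine refinement. The problem is the final paragraph: the claim you yourself single out as ``the step I expect to be hardest'' --- that for large $t$ a single family of closed intervals, attached to all particles in the envelope at time $t$, covers $\supp\mu_\infty$ --- is essentially the content of the corollary, and you do not prove it. You gesture at a first/second-moment estimate showing that lines entering the envelope arbitrarily late carry a vanishing fraction of $\vert\mathcal N(t)\vert$, but no such estimate is formulated, and the intervals of your ``diagonal cover'' are not even well defined: the scaling bound you want to apply comes from Lemma \ref{lemma:diameter}, whose constant depends on $K$, whereas your cover must simultaneously capture descendants that are typical only from some arbitrarily late time onward. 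As written, this is a proof plan whose decisive step is missing.

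For comparison, the paper argues directly that $\supp\mu_\infty=\bigcup_{K}\supp\mu_\infty^K\subseteq\bigcup_K\Upsilon(K)$ and concludes by countable stability of Hausdorff dimension together with Proposition \ref{prop:dim_typical}; it does not pass to a closure and therefore never meets the obstacle you construct for yourself. You are right that, read literally, the identity $\supp\mu_\infty=\bigcup_K\supp\mu_\infty^K$ should carry a closure and that closures can raise Hausdorff dimension, so your concern is a fair remark about the write-up. But identifying the subtlety is not resolving it: what you actually establish is $\dim\overline{\Upsilon(K)}\leq 2\beta$ for each fixed $K$ and $\dim\bigl(\bigcup_K\Upsilon(K)\bigr)\leq 2\beta$, not the claimed bound on $\dim\supp\mu_\infty$. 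To complete your version you would need either to carry out the covering/mass estimate you defer, or to show that the outer closure adds a set of dimension at most $2\beta$.
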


\begin{proof}
    Recall the definition of $\Upsilon(K)$ from \eqref{eq:def:typical_acc}, the set of accumulation points on $\partial \HH$ of particles counted in $\mu^K_t$. By definition, we have $\mu_\infty^K(\RR \backslash \Upsilon(K)) = 0$. By the monotonicity in Proposition \ref{prop:conv_typical_to_mu} we get that
    \begin{equation*}
        \mu_\infty\left( \bigcup_{K=1}^\infty \Upsilon(K)\right)=1.
    \end{equation*}
    Therefore,
    \begin{equation*}
        \dim\mu_\infty \leq \dim \bigcup_{K=1}^\infty \Upsilon(K) = \sup_{K\in \NN} \dim \Upsilon(K) = 2\beta,
    \end{equation*}
    where we used that the Hausdorff dimension of a countable union is the supremum of {the} Hausdorff dimensions, and that $\dim \Upsilon(K) = 2\beta$ almost surely on the event that $\Upsilon(K)$ is non-empty by Proposition \ref{prop:dim_typical}. {Moreover,} we almost surely have that for $K$ large enough $\Upsilon(K)$ is non-empty.
\end{proof}

We could do a similar proof for the lower bound in Theorem \ref{thm:main:1}. Here we have the bound
\begin{equation*}
    \dim \mu_\infty \geq \dim  \mu_\infty^K. 
\end{equation*}
The issue is that it should hold that $\dim  \mu_\infty^K = \dim \Upsilon(K)$, {although} this is not obvious. Nevertheless, this is true: in the proof of Lemma \ref{lemma:lower_bound_simplified} we construct a probability measure $\nu$ on $\Upsilon(K)$ to then apply Frostman's Lemma for a lower bound on the Hausdorff dimension of $\Upsilon(K)$. A similar proof yields a lower bound on $\dim \nu$. One can see that $\nu$ is actually absolutely continuous with respect to $\mu^K_\infty$, hence the lower bound on the Hausdorff dimension also applies to $\mu^K_\infty$. We leave the details of this to the reader and instead provide a proof of the lower bound using different methods in the next section.

\section{More properties of \texorpdfstring{$\mu_\infty$}{mu}}\label{sec:more_properties}

In this section we show the lower bound of Theorem \ref{thm:main:1} as well as the other properties of $\mu_\infty$ which we claimed in Theorem \ref{thm:main:2}. The key tool is a second moment computation. 

\begin{lem}[many-to-two] \label{lemma:many_to_two}
    {Fix $r\geq 0$,} under $\PP^r$, let $(X_s^1,Y_s^1)_{s\geq 0}$ and $(X_s^2,Y_s^2)_{s\geq 0}$ be two hyperbolic Brownian motions that move together until time $r$ and afterwards move independently. Then we have for any interval $I\subseteq \RR$ that 
    \begin{equation*}
        \EE\left[\mu_\infty(I)^2\right] = 2\beta\int_0^\infty\PP^r\left(X^1_\infty,X_\infty^2\in I\right) e^{-\beta r}dr.
    \end{equation*}
    Similarly for any $K>0$,
        \begin{align*}
        &\EE\left[\mu_\infty^K(I)^2\right] = 2\beta\int_0^\infty\PP^r\bigg(X^1_\infty,X_\infty^2\in I, \\ 
        & \hspace{4cm}\forall s\geq K: \log(Y_s^1)+s/2, \log(Y_s^2)+s/2 \in [-s^{2/3},s^{2/3}]\bigg) e^{-\beta r}dr.
    \end{align*}
\end{lem}

\begin{proof}
    This is a variant of the classical many--to--two lemma. See \cite{harris_many--few_2017} for the statement and proof in the general setting. Applying the many--to--two lemma to $\mu_t$ yields
    \begin{align*}
        \EE\left[\mu_t(I\times \RR)^2\right] = 2\beta\int_0^t\PP^r\left(X^1_t,X_t^2\in I\right) e^{-\beta r}dr + e^{-\beta t} \PP\left(X_t^1 \in I\right).
    \end{align*}
    The dominated convergence theorem now completes the proof. We proceed analogously for $\mu^K_\infty.$
\end{proof}

To compute $\EE\left[\mu_\infty(I)^2\right]$, we need an estimate on hyperbolic Brownian motion. This is related to \cite[Lemma 6]{lalley_hyperbolic_1997} where {the authors} used geometric arguments to show that $X_\infty$ has a bounded density with respect to the Lebesgue measure. We determine some dependence on the starting position. 

\begin{lem}\label{lem:bound_on_bm}
    For any $x\in \RR$, $y\in (0,\infty)$ and any interval $I\subseteq \RR$ we have that
    \begin{equation*}
        \PP_{(x,y)} (X_\infty\in I)\leq\left( \frac{1}{\pi} \frac{\vert I \vert}{y} \right)\wedge 1,
    \end{equation*}
    {where $\vert I \vert$ is the length of the interval.}
\end{lem}

\begin{proof}
    First, by the isometries of $\HH$ we have that
    \begin{equation}\label{eq:tefe}
        \PP_{(x,y)} (X_\infty\in I) = \PP_{(0,1)} (X_\infty\in y^{-1}(I-x)),
    \end{equation}
    where $y^{-1}(I-x)=\{a\in \RR:ya+x\in I\}$. Next, it is known that $X_\infty$ follows a standard Cauchy distribution under $\PP_{(0,1)}$, this means that for any $a<b$ we have
    % the fact that the cauchy distribution pops up here can be seen as follows: the exit distribution of BM from the hyperbolic plane is the same as for euclidean BM - given that the hyperbolic BM actually exits. Next, we can use rotational invariance of euclidean BM and we find the poisson kernel which is equivalent to the cauchy distribution.
    \begin{equation*}
        \PP_{(0,1)}(X_\infty \in [a,b]) =  \frac{1}{\pi}\left(\arctan b - \arctan a\right) \leq \frac{b-a}{\pi}.
    \end{equation*}
    Applying this to \eqref{eq:tefe} and adding the trivial bound $\PP(\ldots)\leq 1$ yields the lemma.
\end{proof}

\begin{prop}\label{prop:2nd_moment} $\hspace{1cm} $
    \begin{enumerate}
        \item [(i)]
    For any $\beta>0$ and $\beta \neq 3$ there is $C_1<\infty$ such that
    \begin{equation*}
        \limsup_{\eps \to 0} \sup_{\substack{I\subseteq \RR \\ \vert I \vert =\eps}} \frac{\EE\left[ \mu_\infty(I)^2\right]}{\eps^{2 \wedge(1+\beta/3)}} \leq C_1<\infty.
    \end{equation*}
    For $\beta=3$, replace $\eps^2$ above by $\eps^2 \log(\eps^{-1})$.

    \item [(ii)]
    For any $\beta>0 $, any $K>0$, and any $\delta>0$, there is $C_2=C_2(\beta, K, \delta)<\infty$ such that 
    \begin{equation*}
        \limsup_{\eps \to 0} \sup_{\substack{I\subseteq \RR \\ \vert I \vert =\eps}} \frac{\EE\left[ \mu^K_\infty(I)^2\right]}{\eps^{2 \wedge(1+2\beta-\delta)}} \leq C_2<\infty.
    \end{equation*}

    \end{enumerate}
\end{prop}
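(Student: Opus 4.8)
The plan is to compute both second moments via the many-to-two formula (Lemma \ref{lemma:many_to_two}), reducing each to an integral over the split time $r$ of the joint probability that two hyperbolic Brownian motions — coalesced until time $r$, independent thereafter — both exit into a small interval $I$ of length $\eps$. The key structural fact is that at the split time $r$ the common particle sits at height $Y_r \approx e^{-r/2}$ (for $\beta>1/8$ this is the typical height, and it is exactly the height enforced in the $\mu^K_\infty$ version), and after splitting the two motions exit independently from that height.

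First I would set up the conditioning. Conditional on the trajectory up to time $r$, in particular on the split position $(X_r, Y_r)$, the two exit points $X^1_\infty, X^2_\infty$ are conditionally independent, each distributed as $X_\infty$ started from $(X_r,Y_r)$. I would therefore bound
\begin{equation*}
    \PP^r\!\left(X^1_\infty, X^2_\infty \in I\right) = \EE\left[\PP_{(X_r,Y_r)}(X_\infty\in I)^2\right]
\end{equation*}
and apply Lemma \ref{lem:bound_on_bm} to each factor, giving $\PP_{(X_r,Y_r)}(X_\infty\in I) \leq (c\eps/Y_r)\wedge 1$. Squaring yields $\big((c\eps/Y_r)\wedge 1\big)^2$. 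The crossover between the two regimes of the minimum happens when $Y_r \approx \eps$, i.e. when $e^{-r/2}\approx\eps$, i.e. $r \approx 2\log(\eps^{-1})$. For $r$ below this threshold $Y_r \gg \eps$ and the bound is $(c\eps/Y_r)^2$; for $r$ above it the bound saturates at $1$. Plugging into the many-to-two integral $2\beta\int_0^\infty (\cdots)e^{-\beta r}dr$ and using $Y_r\approx e^{-r/2}$ gives schematically
\begin{equation*}
    \eps^2\int_0^{2\log(1/\eps)} e^{r}\,e^{-\beta r}\,dr \; + \; \int_{2\log(1/\eps)}^\infty e^{-\beta r}\,dr.
\end{equation*}
The first integral behaves like $\eps^2$ times $e^{(1-\beta)r}$ evaluated at the endpoint $r=2\log(\eps^{-1})$, producing $\eps^2\cdot\eps^{-2(1-\beta)}=\eps^{2\beta}$ when $\beta<1$ (dominated by the upper endpoint), $\eps^2$ when $\beta>1$ (dominated by the lower endpoint), and an extra $\log(\eps^{-1})$ factor at the boundary case; the second (tail) integral contributes $\eps^{2\beta}$. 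Matching exponents should reproduce $\eps^{2\wedge(1+\beta/3)}$ in part (i) — the $\beta/3$ arising because the correct typical-height fluctuation sharpens the naive $Y_r\approx e^{-r/2}$ estimate — and the cleaner $\eps^{2\wedge(1+2\beta-\delta)}$ in part (ii), where the typicality constraint $\log Y_s + s/2\in[-s^{2/3},s^{2/3}]$ pins the height precisely and removes the loss.

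For part (ii) I would carry the height constraint through the conditioning: on the event $\{\forall s\geq K:\ \log Y^i_s+s/2\in[-s^{2/3},s^{2/3}]\}$ the common height satisfies $Y_r=e^{-r/2+o(r^{2/3})}$, so $\eps/Y_r \approx \eps\, e^{r/2+o(r^{2/3})}$ and the subexponential correction is absorbed into the $\eps^{-\delta}$ slack for every fixed $\delta>0$. I would also need the post-split motions to \emph{respect} the typicality window, which only helps (it can only decrease the probability), so Lemma \ref{lem:bound_on_bm} applied without the constraint remains a valid upper bound; the constraint is used solely to control $Y_r$ at the split. The main obstacle I anticipate is \emph{not} the integral itself but justifying the replacement $Y_r\approx e^{-r/2}$ in part (i), where there is no typicality constraint: for a free hyperbolic Brownian motion $\log Y_r + r/2 = B_r$ is an unconstrained Brownian motion, so $Y_r$ has heavy (log-normal) fluctuations, and one must integrate $\EE[(\eps/Y_r)^2\wedge 1]$ over both $r$ and the Gaussian fluctuation of $B_r$. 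This two-dimensional Laplace-type integral is where the exponent $1+\beta/3$ (rather than $1+\beta$) genuinely comes from: the optimal contribution balances the branching weight $e^{-\beta r}$, the variance growth $e^{r}$, and the Gaussian cost $e^{-B_r^2/2r}$ of the height deviation needed to make $Y_r\approx\eps$, and carrying out this saddle-point optimization carefully — and confirming it is sharp except for the achievable exponent $\eps^2$ — is the crux of the argument and the reason the case $\beta=3$ is singular.
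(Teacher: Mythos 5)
You have the right skeleton---the many--to--two formula, Lemma \ref{lem:bound_on_bm}, and a split of the $r$--integral around $r\approx 2\log(\eps^{-1})$---but the central estimate is set up in a way that cannot produce the claimed exponents. The problem is the step ``apply Lemma \ref{lem:bound_on_bm} to each factor'', which reduces the integrand to $\EE\big[\big((c\eps/Y_r)\wedge 1\big)^2\big]$. This quantity does not decay in $\eps$ once $r\geq 2\log(\eps^{-1})$: there $\PP(Y_r\leq c\eps)=\PP\big(B_r\leq r/2-\log(\eps^{-1})+\log c\big)\geq 1/4$, so the minimum saturates at $1$ on an event of probability bounded below, and the tail of the many--to--two integral alone contributes at least a constant times $\eps^{2\beta}$. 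Since $\eps^{2\beta}\gg\eps^{1+\beta/3}$ for $\beta<3/5$, no saddle--point refinement can rescue part (i) for small $\beta$ from this starting inequality; carrying out the Laplace analysis you sketch in fact gives $\eps^{2\wedge\frac{1}{2}(\sqrt{1+8\beta}-1)}$, which is strictly weaker than $\eps^{2\wedge(1+\beta/3)}$ for every $\beta<3$. The same defect appears in your part (ii): squaring the bound $c\eps e^{r/2+r^{2/3}}$ yields only $\eps^{2\wedge(2\beta-\delta)}$, one full power of $\eps$ short of the claim. The bound $\PP_{(x,y)}(X_\infty\in I)\leq (c\eps/y)\wedge 1$ is simply very lossy when $y$ is small but $x$ is far from $I$, and squaring it discards the information that the split position must sit ``above'' $I$.

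What is missing is precisely this localisation of the split position in the $x$--coordinate, and the paper's proof obtains it by applying Lemma \ref{lem:bound_on_bm} to \emph{one} factor only, keeping the other as the exact exit probability: $\PP^r(X^1_\infty,X^2_\infty\in I)\leq\EE\big[\II_{\{X_\infty\in I\}}\big((c\eps/Y_r)\wedge 1\big)\big]$. The retained indicator can be integrated out by the tower property to give $\PP_{(0,1)}(X_\infty\in I)\leq c\eps$ uniformly in $r$ (this supplies the leading ``$1+$'' in both exponents and the tail contribution $\eps^{1+2\beta}$), while for moderate $r$ one conditions on $Y_r$ once more to get $\EE[\II_{\{X_\infty\in I\}}Y_r^{-1}]\leq c\eps\,\EE[Y_r^{-2}]=c\eps\,e^{3r}$. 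The identity $\EE[Y_r^{-2}]=e^{3r}$ (not a balance against the Gaussian cost of a height deviation) is the true origin of the $\beta/3$: the integral $\eps^2\int e^{(3-\beta)r}dr$ is cut at $r=\log(\eps^{-1/3})$, where it matches the bound $c\eps\,e^{-\beta r}$ valid beyond that point, and the three ranges $[0,1]$, $[1,\log(\eps^{-2})]$, $[\log(\eps^{-2}),\infty)$ give $\eps^2$, $\eps^{1+\beta/3}$ and $\eps^{1+2\beta}$ respectively. For part (ii) the same asymmetric treatment with the typicality window $Y_r\in[e^{-r/2-r^{2/3}},e^{-r/2+r^{2/3}}]$ at the split time gives $c^2\eps^2e^{r/2+r^{2/3}}$ and hence $\eps^{2\wedge(1+2\beta-\delta)}$. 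Your instincts that the post--split typicality constraint ``only helps'' and that the $T_1$ and crossover regimes are routine are both correct; the repair needed is the asymmetric treatment of the two factors.
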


Note that there is a discrepancy between $\mu^K_\infty$ and $\mu_\infty$: the exponent for $\mu^K_\infty$ is $2\wedge(1+2\beta-\delta)$ for any arbitrary $\delta>0$ whereas for $\mu_\infty$ it is $2\wedge(1+\beta/3)$. We believe that the exponent $2\wedge(1+2\beta-\delta)$ should also apply to $\mu_\infty$ but this would require better estimates, for example a uniform control in $C_2(K)$\footnote{As pointed out by a reviewer, an alternative approach would be to consider the Fourier transform of $\mu_\infty$ (which is surprisingly tractable) to try to improve the exponent.}. For $\mu_\infty$ and $\beta=3$, the extra logarithmic factor is an artifact of{} suboptimal estimates in the proof. 

\begin{proof}[Proof of Proposition \ref{prop:2nd_moment} (i).] Assume for now that $\beta\neq 3$.
    Without loss of generality, we consider $I=[-\eps/2,\eps/2]$. 
    
    We use Lemma \ref{lemma:many_to_two} and subdivide the integral into three parts, $J_1=[0,1],J_2=[1,\log(\eps^{-2})]$ and $J_3=[\log(\eps^{-2}), \infty)$. For $i\in \{1,2,3\}$, let
\begin{equation*}
    T_i = \int_{J_i} \PP^r\left(X^1_\infty,X_\infty^2\in I\right) e^{-\beta r}dr.
\end{equation*}
We start with $T_1$. Here we bound $e^{-\beta r} \leq 1$ and then condition on the splitting position at time $r$,
\begin{align*}
    T_1 = \int_0^1 \int_{\HH}  \PP_{(x,y)}\left(X_\infty \in I\right)^2 \PP\left((X_r,Y_r) \in (dx,dy)\right) dr.
\end{align*}
We apply Lemma \ref{lem:bound_on_bm} {to obtain}
\begin{equation}\label{eq:T1}
    T_1 \leq c_1 \eps^2\int_0^1 \EE\left[Y_r^{-2}\right]dr \leq c_2 \eps^2,
\end{equation}
for some {constants} $c_1,c_2>0$. 

Next we consider $T_2$. By Lemma \ref{lem:bound_on_bm},
\begin{align*}
    \PP^r\left(X^1_\infty,X_\infty^2\in I\right) &=\int_\HH \PP((X_r,Y_r)\in(dx,dy))\PP_{(x,y)}(X_\infty \in I)^2 \\
    &\leq \int_\HH \PP((X_r,Y_r)\in(dx,dy))\PP_{(x,y)}(X_\infty \in I)\PP_{(r,0,y)}(X_\infty \in I)\\
    &\leq \int_\HH \PP((X_r,Y_r)\in(dx,dy))\PP_{(x,y)}(X_\infty \in I)\left(\frac{1}{\pi}\frac{\eps}{y} \wedge 1\right).
\end{align*}
{Hence}
\begin{align*}
    T_2 \leq \int_1^{\log(\eps^{-2})}\EE\left[\II_{X_\infty \in I} \left(\frac{1}{\pi}\frac{\eps}{Y_r} \wedge 1\right)\right] e^{-\beta r}dr.
\end{align*}
We split this integral into two parts: for $r\in [1,\log(\eps^{-1/3})]$ we bound
\begin{align*}
    \EE\left[ \II_{X_\infty\in I}\left(\frac{1}{\pi}\frac{\eps}{Y_r} \wedge 1\right)\right] &\leq \frac{1}{\pi}\eps\EE\left[\II_{X_\infty\in I}  Y_r^{-1}\right] = \frac{1}{\pi}\eps \EE\left[\PP(X_\infty\in I \vert Y_r) 
    Y_r^{-1}\right]\\
    &\leq \frac{1}{\pi^2}\eps^2 \EE\left[
    Y_r^{-2}\right] = \frac{1}{\pi^2}\eps^2 e^{3r},
\end{align*}
where we used Lemma \ref{lem:bound_on_bm} again, and where we recall that $Y_r^{-1} = \exp(r/2-B_r)$.
{F}or $r\in  [\log(\eps^{-1/3}),\log(\eps^{-2})]$, {we estimate}
\begin{equation*}
    \EE\left[ \II_{X_\infty\in I}\left(\frac{1}{\pi}\frac{\eps}{Y_r} \wedge 1\right)\right] \leq \PP(X_\infty \in I) \leq \frac{1}{\pi}\eps,
\end{equation*}
where we also used Lemma \ref{lem:bound_on_bm}.
Then our estimate on $T_2$ becomes
\begin{align}
    T_2 &\leq \frac{1}{\pi^2}\eps^2\int_1^{\log(\eps^{-1/3})} e^{3r}e^{-\beta r}dr + \frac{1}{\pi}\eps\int_{\log(\eps^{-1/3})}^{\log(\eps^{-2})}e^{-\beta r}dr \nonumber\\
    &\leq c_3\left(\eps^2 + \eps^{1+\beta/3} + \eps^{1+2\beta}\right), \label{eq:T2}
\end{align}
for some $c_3>0$.
For $T_3$, we estimate using Lemma \ref{lem:bound_on_bm}
\begin{equation*}
    \PP^r(X_\infty^1, X_\infty^2 \in I)\leq \PP(X_\infty^1\in I) \leq \frac{\eps}{\pi}.
\end{equation*}
Therefore
\begin{align}\label{eq:T3}
    T_3\leq \frac{1}{\pi}\eps \int_{\log(\eps^{-2})}^\infty e^{-\beta r}dr = c_4 \eps^{1+2\beta},
\end{align}
for $c_4>0$. 
To complete the proof, we combine \eqref{eq:T1}, \eqref{eq:T2} and \eqref{eq:T3}.

Lastly, in the case where $\beta=3$, the final estimate on $T_2$ is 
\begin{align}
    T_2 &\leq \frac{1}{\pi^2}\eps^2\int_1^{\log(\eps^{-1/3})} e^{3r}e^{-3 r}dr + \frac{1}{\pi}\eps\int_{\log(\eps^{-1/3})}^{\log(\eps^{-2})}e^{-3 r}dr \nonumber\leq c_3 \eps^2 \log(\eps^{-1}). 
\end{align}
\end{proof}

\begin{proof}[Proof of Proposition \ref{prop:2nd_moment} (ii).]
    Without loss of generality, consider $I=[-\eps/2,\eps/2]$. 
    We proceed as in the proof for (i), splitting the integral of Lemma \ref{lemma:many_to_two} into $T_1,T_2,T_3$. For ease of notation, assume that $K=1$, otherwise set $T_1=[0,K]$ and $T_2=[K,\log(\eps^{-1})]$. We use the same bounds on $T_1$ and $T_3$ but a different one on $T_2$. For $r\in [1,\log(\eps^{-2})]$, we integrate over the splitting location

    \begin{align}
        &\PP^r\left(X^1_\infty,X_\infty^2\in I,\forall s\geq 1: \log(Y_s^1)+s/2, \log(Y_s^2)+s/2 \in [-s^{2/3},s^{2/3}]\right) \nonumber\\ 
        &\hspace{3cm}\leq \int_{\RR\times [e^{-s/2-s^{2/3}},e^{-s/2+s^{2/3}}]} \PP((X_r,Y_r)\in(dx,dy))\PP_{(x,y)}(X_\infty \in I)^2 , \label{eq:te}
    \end{align}
    where the inequality comes from the fact that {we} kept the path restriction for $Y$ only for the splitting location. By Lemma \ref{lem:bound_on_bm} we have for any $y\in [e^{-s/2-s^{2/3}},e^{-s/2+s^{2/3}}]$ that
    \begin{equation*}
        \PP_{(x,y)}(X_\infty \in I) \leq \frac{1}{\pi}\eps e^{r/2+r^{2/3}}.
    \end{equation*}
    We apply this only to one factor of $\PP_{(r,x,y)}(X_\infty \in I)$ in \eqref{eq:te},
    \begin{align*}
        &\PP^r\left(X^1_\infty,X_\infty^2\in I,\forall s\geq 1: \log(Y_s^1)+s/2, \log(Y_s^2)+s/2 \in [-s^{2/3},s^{2/3}]\right) \nonumber\\ 
        &\hspace{2cm}\leq \frac{1}{\pi}\eps e^{r/2+r^{2/3}}\int_{\RR\times [e^{-s/2-s^{2/3}},e^{-s/2+s^{2/3}}]} \PP((X_r,Y_r)\in(dx,dy))\PP_{(x,y)}(X_\infty \in I) \\
        &\hspace{2cm}\leq \frac{1}{\pi}\eps e^{r/2+r^{2/3}}\PP(X_\infty \in I) \\
        &\hspace{2cm}\leq \frac{1}{\pi^2}\eps^2 e^{r/2+r^{2/3}},
    \end{align*}
    where we applied Lemma \ref{lem:bound_on_bm} again, this time with $(x,y)=(0,1)$. This then yields the following bound on $T_2$,
    \begin{align*}
        T_2 &\leq  \frac{1}{\pi^2}\eps^2\int_1^{\log(\eps^{-2})}e^{r/2+r^{2/3}} e^{-\beta r} dr \leq C(\delta)\left(\eps^2 + \eps^{1+2\beta-\delta} \right),
    \end{align*}
    for any $\delta>0$ and a constant $C(\delta)$. Here we used that $\exp(\log(\eps^{-1})^{2/3})$ grows slower than $\eps^{-\delta}$ for any $\delta$ as $\eps\to0$. Combining this with \eqref{eq:T1} and \eqref{eq:T3} completes the proof. 
\end{proof}

We can use the same methods to derive bounds on the expected $k$--th moment of $\mu_\infty^K(I)$. 

\begin{lem}\label{lemma:k_th_moment}
    For any $\beta>0$, any $K>0$, any $k\in \NN_{\geq 2}$, and any $\delta>0$, there is $C_3 =C_3(\beta, K, k, \delta)< \infty$ such that
    \begin{equation*}
        \limsup_{\eps \to 0} \sup_{\substack{I\subseteq \RR \\ \vert I \vert =\eps}} \frac{\EE\left[ \mu^K_\infty(I)^k\right]}{\eps^{k \wedge(1+2\beta (k-1)-\delta)}} \leq C_3<\infty.
    \end{equation*}
\end{lem}

\begin{proof}
    Due to considering the $k$--th moment we now need the many--to--few lemma. This is tedious to state, so
    {we only present the consequences that we need, the precise formulation can be found in \cite{harris_many--few_2017}.}
    To state the many--to--few lemma, we need to describe the joint law of $k$ hyperbolic Brownian motions. We do this by describing the behaviour of $k$ marks $1,\ldots,k$.
    \begin{enumerate}
        \item We start with one particle carrying all marks. 
        \item All particles move as independent hyperbolic Brownian motions, branching at rate $\beta$. 
        \item For a particle carrying $j$ marks, at a branching event, each mark is independently attached to one of the two offspring particles with equal probability. 
    \end{enumerate}
    Let $(X^i_t,Y^i_t)_{t\geq 0}^{1\leq i \leq k}$ denote the positions of the marks. The many--to--few lemma states that there is an explicit function $g((X^i_t,Y^i_t;1\leq i\leq k)$ such that
    \begin{align}\label{eq:crude_many_to_k}
        &\EE\left[\mu^K(I)^k\right]=\EE\bigg[\II_{ \bigcap_{i=1}^k \{X_\infty^i\in I\}} \II_{\bigcap_{i=1}^k\{\forall s\geq K: \log(Y_s^i)+s/2\in [-s^{2/3},s^{2/3}]\}} \nonumber \\ &\hspace{5cm} \times \exp\left( \int_0^\infty g((X^i_s,Y^i_s);1\leq i\leq k)ds\right)\bigg].
    \end{align}
    For $i\geq 2$, let $s_i$ be the last time that the mark $i$ is carried by the same particle as a mark $j$ with $j<i$. Set $s_1=0$.
    {Let} $\mathcal{G}=\sigma\left(\left\{s_i,Y_{s_i}^i,s\geq0,1\leq i \leq k\right\}\right)$. Conditional on $\mathcal{G}$, we have 
    \begin{align*}
        \PP\left(\forall i\leq k: X_\infty^i \in I \middle\vert \mathcal{G}\right) \leq \prod_{i=1}^k \PP_{(0,Y_{s_i}^i)}\left(X_\infty^i\in I\right) \leq C \prod_{i=1}^k \left(\left(\frac{\eps}{Y_{s_i}^i}\right)\wedge 1\right),
    \end{align*}
    where we used Lemma \ref{lem:bound_on_bm}. Assume that for all $i\geq 2$ we have that $s_i\geq K$. Then on this event we have control on $Y_{s_i}^i$, therefore
    \begin{equation*}
        \PP\left(\forall i\leq k: X_\infty^i \in I \middle\vert \mathcal{G}\right) \leq C \eps \prod_{i=2}^k \left(\left(\eps e^{s_i/2+s_i^{2/3}}\right)\wedge 1\right).
    \end{equation*}
    In fact, this still holds if $s_i\leq K$ by changing $C$.
    Using the explicit representation of $g$ this becomes
    \begin{align*}
        \EE\left[\mu^K(I)^k\right] &\leq C \eps \int_{\RR_+^{k-1}} \left[\prod_{i=2}^k \left(\left(\eps e^{s_i/2+s_i^{2/3}}\right)\wedge 1 \right)e^{-\beta s_i}\right] ds_2\ldots ds_{k}\\
        &= C\eps \left(\int_0^\infty \big(\eps e^{s/2+s^{2/3}} \wedge 1\big)e^{-\beta s}ds\right)^{k-1} \\
        &\leq C \eps\left(\eps+ \eps^{2\beta -\delta/(k-1)} \right)^{k-1},
    \end{align*}
    for any $\delta >0$ and some $C$. 
\end{proof}

From Proposition \ref{prop:2nd_moment} we derive the following corollary which is Theorem \ref{thm:main:2} (i) and (ii). We stated Theorem \ref{thm:main:2} (ii) for $\mu_\infty$, and consequently also $F$, defined on $\partial \DD$. The following corollary is stated for $\mu_\infty$ viewed on $\partial\HH$. {T}his is equivalent because the isometry that maps $\HH$ to $\DD$ is a diffeomorphism and thus preserves Hölder--continuity.

\begin{cor}\label{cor:hoelder} \hspace{1cm}
    \begin{enumerate}
    \item 
    Consider $F(x)=\mu_\infty((-\infty,x])$, the (random) cumulative distribution function of $\mu_\infty$. Then $F$ is almost surely Hölder--continuous for every exponent $\gamma < (1/2) \wedge (\beta/6)$. In particular, $\mu_\infty$ has no atoms almost surely. 

    \item 
    Consider $F^K(x)=\mu^K_\infty((-\infty,x])$, the (random) cumulative distribution function of $\mu^K_\infty$. Then $F^K$ is almost surely Hölder--continuous for every exponent $\gamma < 1 \wedge 2\beta$.
    \end{enumerate}
\end{cor}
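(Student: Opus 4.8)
The plan is to convert the second-moment (and, for part 1, the use of a higher-moment input) estimates of Proposition \ref{prop:2nd_moment} and Lemma \ref{lemma:k_th_moment} into almost-sure modulus-of-continuity statements via a Kolmogorov--Chentsov argument. Recall that $F(x)-F(x')=\mu_\infty(I)$ for the interval $I$ with endpoints $x,x'$, so Hölder continuity of $F$ with exponent $\gamma$ is exactly the statement that $\mu_\infty(I)\lesssim |I|^\gamma$ uniformly over small intervals. The key structural point is that we do not merely have a second moment: the measures $\mu_\infty$ and $\mu_\infty^K$ are random, and a single moment bound only controls $\mu_\infty(I)$ for each fixed $I$ separately. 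To upgrade this to a uniform-in-$I$ Hölder bound we need a chaining/dyadic argument, and for that it is most convenient to have control of a moment of order higher than the naive one dictated by Kolmogorov's criterion.

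First I would set up the dyadic scaffolding. For each scale $n$, consider the dyadic intervals $I_{n,j}=[j2^{-n},(j+1)2^{-n}]$ intersected with the (almost surely bounded, by Lemma \ref{lemma:diameter} and its moment strengthening in the Remark) support of the measure, so that only $O(2^n)$ intervals are relevant at scale $n$. For part 2, Proposition \ref{prop:2nd_moment}(ii) gives $\EE[\mu_\infty^K(I_{n,j})^2]\leq C_2\, 2^{-n(2\wedge(1+2\beta-\delta))}$ for all small scales. A union bound over the $O(2^n)$ intervals then shows, via Markov's inequality, that
\begin{equation*}
    \PP\Big(\max_j \mu_\infty^K(I_{n,j}) > 2^{-n\gamma}\Big) \leq C\,2^n\,2^{2n\gamma}\,2^{-n(2\wedge(1+2\beta-\delta))},
\end{equation*}
and this is summable in $n$ precisely when $2\gamma + 1 < 2\wedge(1+2\beta-\delta)$, i.e. when $\gamma < (1/2)\wedge(\beta-\delta/2)$. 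The second moment alone therefore only delivers exponent $1\wedge2\beta$ up to the factor $1/2$ loss; to reach $\gamma<1\wedge2\beta$ I would instead feed the $k$-th moment bound of Lemma \ref{lemma:k_th_moment} into the same union bound. There the exponent $k\wedge(1+2\beta(k-1)-\delta)$ makes the summability condition read $k\gamma + 1 < k\wedge(1+2\beta(k-1)-\delta)$; in the regime $2\beta<1$ this becomes $\gamma < 2\beta - \delta/k - 1/k$, and sending $k\to\infty$ recovers every $\gamma<2\beta$ (and similarly every $\gamma<1$ when $2\beta\geq1$). A standard Borel--Cantelli argument then controls the measure of every dyadic interval simultaneously for all large $n$, and an elementary chaining step passes from dyadic intervals to arbitrary intervals, giving the uniform Hölder bound and hence part 2. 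For part 1 the same argument runs verbatim with Proposition \ref{prop:2nd_moment}(i) in place of Lemma \ref{lemma:k_th_moment}, yielding summability for $\gamma<(1/2)\wedge(\beta/3)$; non-atomicity is then immediate since a Hölder-continuous distribution function is continuous.

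The main obstacle is the passage from control of dyadic intervals to genuine uniform Hölder continuity, together with justifying the union bound over an \emph{a priori} unbounded range of indices $j$. The first issue is handled by the moment bound on $\diam\Upsilon(K)$ (the Remark after Lemma \ref{lemma:diameter}), which confines the support so that only polynomially many dyadic cells carry mass and the tail contribution is negligible; the second, the dyadic-to-arbitrary chaining, is routine once one notes that any interval of length $\ell\in[2^{-(n+1)},2^{-n}]$ is covered by at most two consecutive dyadic intervals of scale $n$, so its mass is bounded by twice the maximal dyadic mass at that scale. A minor technical point is that Lemma \ref{lemma:k_th_moment} and Proposition \ref{prop:2nd_moment} are stated as $\limsup_{\eps\to0}$ bounds, so the per-scale constants hold only for $n$ large; this is harmless since we only need summability of the tail. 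I would present part 2 in full and remark that part 1 follows identically, substituting the weaker moment input.
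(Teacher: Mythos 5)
Your proposal is essentially the paper's proof with Kolmogorov's continuity theorem unpacked by hand: the paper feeds the moment bounds of Proposition \ref{prop:2nd_moment} and Lemma \ref{lemma:k_th_moment} directly into Kolmogorov--Chentsov, whereas you re-derive that criterion via dyadic intervals, Markov plus a union bound, Borel--Cantelli, and the two-dyadic-cell covering step (which, for the monotone function $F$, indeed replaces the usual chaining sum). Your part 2 is correct and recovers exactly the paper's exponent $\frac{k-1}{k}\wedge\frac{2\beta(k-1)-\delta}{k}\to 1\wedge 2\beta$ as $k\to\infty$.

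One genuine discrepancy in part 1: you assert the argument ``runs verbatim \ldots yielding summability for $\gamma<(1/2)\wedge(\beta/3)$'', but your own summability condition, $1+2\gamma<2\wedge(1+\beta/3)$, gives $\gamma<(1/2)\wedge(\beta/6)$. The second moment bound of Proposition \ref{prop:2nd_moment}(i) cannot deliver $\beta/3$ by this route --- this is the same factor-of-two loss you correctly identify in part 2 and circumvent there by taking $k\to\infty$, an option unavailable for $\mu_\infty$ since only the second moment is controlled. Note, however, that the identical issue appears in the paper's own proof (Kolmogorov with $a=2$ and $1+b=2\wedge(1+\beta/3)$ yields $\gamma<b/a=(1/2)\wedge(\beta/6)$), so this is a problem with the stated exponent rather than a defect specific to your method. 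A minor further point: you invoke Lemma \ref{lemma:diameter} to bound the number of relevant dyadic cells, but that lemma concerns $\Upsilon(K)$ and is only proved for $\beta<1/2$; it is cleaner to establish local H\"older continuity on each compact $[-R,R]$ and exhaust $\RR$ afterwards.
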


\begin{proof}
    The key idea of this proof is to look at $(x\mapsto F(x))$ as {a} stochastic process to which we can apply Kolmogorov's continuity theorem. By Proposition \ref{prop:2nd_moment} we have {that} for $\eps$ small enough, uniformly in $x$,
    \begin{equation*}
        \EE\left[\vert F(x+\eps)-F(x)\vert^2 \right] \leq C\eps^{2\wedge (1+\beta/3)}.
    \end{equation*}
    For $\beta=3$ we use $\eps^{2-\delta}$ for arbitrarily small $\delta>0$.
    It now follows immediately from Kolmogorov's continuity theorem (see for example \cite[Theorem 4.23]{Kal2021}) that $F$ is almost surely continuous because $F$ is non-decreasing and has càdlàg paths. Further, $F$ is Hölder continuous for any $\gamma< (1/2)\wedge (\beta/6)$.

    We can improve on the bound on $\gamma$ if we consider $F^K$. The same reasoning applies but if we use Lemma \ref{lemma:k_th_moment} instead of Proposition \ref{prop:2nd_moment} then Kolmogorov's continuity theorem provides us with Hölder continuity for any $\gamma$ with 
    \begin{equation*}
        \gamma < \frac{k-1}{k} \wedge \frac{2\beta(k-1)-\delta}{k}.
    \end{equation*}
    Because $k\geq 2$ and $\delta>0$ are arbitrary, we have Hölder continuity for any $\gamma < 1\wedge 2\beta$.
\end{proof}

We also complete the proof of Theorem \ref{thm:main:1} by showing a lower bound on the Hausdorff dimension.

\begin{cor}
    For any $\beta>0$, we almost surely have that $\dim \mu_\infty \geq 2\beta \wedge 1$.
\end{cor}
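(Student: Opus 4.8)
The plan is to run Frostman's energy method (Lemma \ref{lemma:frostman}), but applied to the \emph{typical} measures $\mu^K_\infty$ rather than to $\mu_\infty$ directly. The reason is that Proposition \ref{prop:2nd_moment}(i) only gives $\mathbb{E}[\mu_\infty(I)^2]\lesssim |I|^{1+\beta/3}$, which is too weak (it would yield a lower bound of order $\beta/3$), whereas Proposition \ref{prop:2nd_moment}(ii) gives the sharper exponent $\mathbb{E}[\mu^K_\infty(I)^2]\lesssim |I|^{1+2\beta-\delta}$. Since $\mu^K_\infty\leq\mu_\infty$ we have $\supp\mu_\infty\supseteq\supp\mu^K_\infty$, so any lower bound on $\dim\supp\mu^K_\infty$ transfers to $\supp\mu_\infty$.

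First I would fix $K,R\in\NN$, a target exponent $\eta<2\beta\wedge 1$, and restrict to a bounded window by setting $\nu^{K,R}:=\mu^K_\infty|_{[-R,R]}$; restricting to a bounded interval is harmless for a lower bound on dimension and is what keeps the dyadic sums below finite. The heart of the argument is a bound on the expected $\eta$-energy $\mathbb{E}\big[\iint |x-y|^{-\eta}\,\nu^{K,R}(dx)\,\nu^{K,R}(dy)\big]$ via a dyadic decomposition of $\{|x-y|\approx 2^{-n}\}$. Covering $[-R,R]$ by the $O(R2^n)$ dyadic intervals $I$ of length $2^{-n}$, and writing $\tilde I$ for $I$ enlarged by its two neighbours (so that $x\in I$ and $|x-y|\le 2^{-n}$ forces $y\in\tilde I$), one gets $(\mu^K_\infty\otimes\mu^K_\infty)(\{|x-y|\le 2^{-n}\})\le\sum_I \mu^K_\infty(\tilde I)^2$, and hence, using Proposition \ref{prop:2nd_moment}(ii),
\[
\mathbb{E}\Big[\iint |x-y|^{-\eta}\,\nu^{K,R}(dx)\,\nu^{K,R}(dy)\Big]\;\lesssim\;\sum_n 2^{n\eta}\sum_{I}\mathbb{E}\big[\mu^K_\infty(\tilde I)^2\big]\;\lesssim\;R\sum_n 2^{n(\eta-2\beta+\delta)}.
\]
Choosing $\delta$ with $\eta<2\beta-\delta$ (and, when $2\beta\ge 1$, instead using the capped exponent $2$ together with $\eta<1$) makes this geometric series converge, so the $\eta$-energy of $\nu^{K,R}$ is finite almost surely.

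To conclude, on the almost sure event that $\nu^{K,R}\neq 0$ I would normalise it to a probability measure supported on the compact set $\supp\nu^{K,R}$; its $\eta$-energy is finite, so Frostman (Lemma \ref{lemma:frostman}) gives $\dim\supp\nu^{K,R}\ge\eta$. Intersecting over a countable family $\eta_m\uparrow 2\beta\wedge 1$ and $K,R\in\NN$ (a single almost sure event), this upgrades to $\dim\supp\nu^{K,R}\ge 2\beta\wedge 1$ whenever $\nu^{K,R}\neq 0$. Finally, to guarantee a nonzero restriction: by Proposition \ref{prop:conv_typical_to_mu}, $\mu^K_\infty\uparrow\mu_\infty$ with $\langle 1,\mu_\infty\rangle=1$, so almost surely $\langle 1,\mu^K_\infty\rangle>0$ for some (random) $K$, and then $\nu^{K,R}\neq 0$ for $R$ large; combined with $\supp\mu_\infty\supseteq\supp\nu^{K,R}$ this yields $\dim\supp\mu_\infty\ge 2\beta\wedge 1$ almost surely.

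I expect the main obstacle to be the interaction of the energy estimate with the non-compact state space $\RR=\partial\HH$: the bound in Proposition \ref{prop:2nd_moment}(ii) is uniform in the \emph{location} of $I$ but does not decay, so the sum $\sum_I\mathbb{E}[\mu^K_\infty(\tilde I)^2]$ does not converge over all of $\RR$, and it is the restriction to $[-R,R]$ (giving only $O(R2^n)$ intervals) that rescues the computation. The remaining delicate point is bookkeeping the almost sure events, namely intersecting the event on which all the energies $E_{\eta_m}(\nu^{K,R})$ are finite with the event that some random pair $(K,R)$ makes $\nu^{K,R}$ nonzero, so that the dimension bound holds almost surely rather than merely in expectation.
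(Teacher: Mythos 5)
Your argument is correct, and it reaches the conclusion by a genuinely different route from the paper. The paper makes the same initial reduction ($\supp \mu^K_\infty \subseteq \supp \mu_\infty$ by Proposition \ref{prop:conv_typical_to_mu}, then pick $K$ with $\mu^K_\infty$ non-trivial), but from there it invokes the H\"older continuity of the cumulative distribution function $F^K$ with any exponent $\gamma < 1\wedge 2\beta$ (Corollary \ref{cor:hoelder}(2)) together with the mass distribution principle. That H\"older statement in turn requires the $k$-th moment bounds of Lemma \ref{lemma:k_th_moment} fed into Kolmogorov's continuity theorem, because the second moment alone would only give $\gamma < \tfrac12 \wedge (\beta - \delta/2)$ after the $1/k$ loss in Kolmogorov's exponent. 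Your dyadic energy estimate sidesteps this entirely: summing $\EE[\mu^K_\infty(\tilde I)^2]\lesssim |I|^{2\wedge(1+2\beta-\delta)}$ over the $O(R2^n)$ dyadic intervals at scale $2^{-n}$ and weighting by $2^{n\eta}$ gives a convergent geometric series for every $\eta<2\beta\wedge 1$, so Frostman's Lemma applies using only Proposition \ref{prop:2nd_moment}(ii). (The finitely many scales at which the $\limsup_{\eps\to0}$ bound is not yet in force are handled by the trivial bound $\mu^K_\infty(\tilde I)\le 1$, and normalising $\nu^{K,R}$ on the event it is nonzero only rescales the energy by an a.s.\ finite factor.) Your approach is thus more economical in its inputs; the paper's approach costs the extra moment computation but yields the quantitative H\"older regularity of Theorem \ref{thm:main:2}(ii) as a byproduct, which is needed elsewhere. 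Your bookkeeping of the almost sure events and of the non-compactness of $\partial\HH$ via the restriction to $[-R,R]$ is exactly the right care to take.
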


\begin{proof}
    By Proposition \ref{prop:conv_typical_to_mu}, we have for any $K$ that $\mu^K_\infty \leq \mu_\infty$ and hence $\dim \mu_\infty \geq \dim  \mu_\infty^K.$
    Choose $K$ large enough so that $\mu_\infty^K$ is a non-trivial measure. This is almost surely possible. Let $F^K$ be the cumulative distribution function for $\mu_\infty^K$ as in Corollary \ref{cor:hoelder}. It is a basic fact of Hausdorff dimension that $F^K$ being Hölder continuous with exponent $\gamma$ implies that 
    \begin{equation*}
        \dim  \mu_\infty^K \geq \gamma.
    \end{equation*}
    This is essentially a consequence of Frostman's Lemma, Lemma \ref{lemma:frostman}, {which} is called the mass distribution principle \cite[Principle 4.2]{falconer_fractal_2014}. This completes the proof as we can choose any $\gamma < 2\beta \wedge 1$ by Corollary \ref{cor:hoelder}.
\end{proof}

Lastly, we prove Theorem \ref{thm:main:2} (iii).

\begin{cor}
    If $\beta > 1/2$, then $\mu_\infty$ almost surely has a density with respect to the Lebesgue measure.
\end{cor}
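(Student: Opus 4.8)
The plan is to apply the quadratic second-moment estimate for $\mu_\infty^K$, which is available precisely when $\beta>1/2$, together with a dyadic martingale construction of densities for the $\mu_\infty^K$, and then to transfer absolute continuity to $\mu_\infty$ using the monotonicity $\mu_\infty^K \leq \mu_\infty^{K'}$. Since absolute continuity is a local property, it suffices to prove that the restriction of $\mu_\infty$ to a fixed bounded interval $[-R,R] \subseteq \partial\HH = \RR$ is absolutely continuous, and then let $R \to \infty$. Fix $\beta > 1/2$ and a $K>0$ (to be sent to infinity later), and choose $\delta \in (0, 2\beta-1)$ so that $2 \wedge (1+2\beta-\delta) = 2$. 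Proposition \ref{prop:2nd_moment} (ii) then furnishes a constant $C = C(\beta,K,\delta)$ and a scale $\eps_0(K) > 0$ with
\[
    \EE\left[\mu_\infty^K(I)^2\right] \leq C\,\vert I\vert^2 \qquad \text{for all intervals } I \text{ with } \vert I\vert \leq \eps_0(K).
\]

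Next I build the density of $\mu_\infty^K$ by a dyadic martingale. Partition $[-R,R]$ into $2^n$ dyadic subintervals $\{I_{n,j}\}_j$ of length $2R\,2^{-n}$ and set $f_n^K(\omega,x) = \mu_\infty^K(\omega, I_{n,j})/\vert I_{n,j}\vert$ for $x \in I_{n,j}$. With respect to the filtration on $\Omega \times [-R,R]$ that reveals the full BBM together with the level-$n$ dyadic structure, $(f_n^K)_n$ is a non-negative martingale, and for $n$ large enough that $2R\,2^{-n} \leq \eps_0(K)$ the estimate above gives
\[
    \EE\left[\int_{-R}^R (f_n^K)^2\,dx\right] = \sum_j \frac{\EE[\mu_\infty^K(I_{n,j})^2]}{\vert I_{n,j}\vert} \leq C\sum_j \vert I_{n,j}\vert = 2RC.
\]
Hence $(f_n^K)_n$ is bounded in $L^2(\PP \otimes dx)$, so it converges there and almost everywhere to a limit $f^K$. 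The martingale (tower) identity $\int_{I_{m,j}} f_n^K\,dx = \mu_\infty^K(I_{m,j})$ for $n \geq m$, combined with $L^1$-convergence, yields $\int_{I_{m,j}} f^K\,dx = \mu_\infty^K(I_{m,j})$ for every dyadic interval; as these generate the Borel $\sigma$-algebra, $\mu_\infty^K = f^K\,dx$ on $[-R,R]$ almost surely. The $L^2$ bound is what rules out a singular part.

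Finally I pass from $\mu_\infty^K$ to $\mu_\infty$. The obstacle is that the constant $C(\beta,K,\delta)$ may diverge as $K \to \infty$, so no uniform $L^2$ bound is available; instead I use monotonicity. By Proposition \ref{prop:conv_typical_to_mu}, $\mu_\infty^K \leq \mu_\infty^{K'}$ for $K < K'$, and comparing the densities on $\{f^K > f^{K'}\}$ forces $f^K \leq f^{K'}$ Lebesgue-almost everywhere; thus $f^\infty := \sup_K f^K = \lim_{K\to\infty} f^K$ exists in $[0,\infty]$ pointwise almost everywhere. For any non-negative continuous $g$ supported in $(-R,R)$, monotone convergence gives $\int g f^K\,dx \to \int g f^\infty\,dx$, while $\int g\,d\mu_\infty^K = \int g f^K\,dx$ and the weak convergence $\mu_\infty^K \to \mu_\infty$ of Proposition \ref{prop:conv_typical_to_mu} give $\int g\,d\mu_\infty^K \to \int g\,d\mu_\infty$. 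Hence $\mu_\infty = f^\infty\,dx$ on $[-R,R]$, and since $\int_{-R}^R f^\infty\,dx = \mu_\infty([-R,R]) \leq 1 < \infty$ we have $f^\infty \in L^1$, so $\mu_\infty$ is absolutely continuous there. Letting $R \to \infty$ completes the proof. The only delicate point is this last transfer, which trades the unavailable $K$-uniform moment control for the monotone increase of the genuinely absolutely continuous $\mu_\infty^K$ to $\mu_\infty$.
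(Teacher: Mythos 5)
Your proof is correct and follows essentially the same route as the paper: both rest on the $L^2$ bound $\EE[\mu_\infty^K(I)^2]\leq C|I|^2$ from Proposition \ref{prop:2nd_moment}(ii) (valid exactly for $\beta>1/2$), build piecewise-constant approximate densities on $[-R,R]$ with uniformly bounded $L^2$ norm, pass to a limiting density for $\mu_\infty^K$, and then use the monotonicity $\mu_\infty^K\leq\mu_\infty^{K'}$ together with $\mu_\infty^K\to\mu_\infty$ to obtain a density for $\mu_\infty$ without any $K$-uniform moment control. The only (harmless) difference is that you extract the limit density via $L^2$-bounded dyadic martingale convergence and the tower identity, whereas the paper uses weak $L^2$ compactness (Banach--Alaoglu) and a subsequence.
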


\begin{proof}
    We first show that for any $K$, $\mu_\infty^K$ has a density with respect to the Lebesgue measure.
    In the regime $\beta> 1/2$, we can choose $\delta$ small enough so that we have $1+2\beta -\delta\geq 2$, hence Proposition \ref{prop:2nd_moment} states 
    \begin{equation}\label{eq:bb}
        \sup_{\substack{\vert I \vert =\eps}} \EE\left[ \mu^K_\infty(I)^2\right] \leq C \eps^2,
    \end{equation}
    uniformly in $\eps$ small. 
    
    We construct a density of $\mu^K_\infty$ by approximations. Let $R,n\in \NN$. Define
    \begin{equation*}
        \rho_n^{K,R}(x) = \sum_{k=-nR}^{nR-1} \frac{\mu^K_\infty([k/n, (k+1)/n))}{
        1/n
        }1_{\{x \in [k/n,(k+1)/n)\}}.
    \end{equation*}
    We think of $\rho_n^{K,R}$ as an approximation to the density of $\mu_\infty^K$ on $[-R,R]$. We compute the expected $L^2$ norm of $\rho_n^{K,R}$,
    \begin{align*}
        \EE\left[\|\rho_n^{K,R}\|_2^2\right] = \sum_{k=-nR}^{nR-1} \frac{1}{n} \EE\left[\frac{\mu^K_\infty([k/n, (k+1)/n))^2}{ 1/n^2}\right] \leq \sum_{k=-nR}^{nR-1} \frac{C}{n} =2RC,
    \end{align*}
    where we used \eqref{eq:bb}. Note that this bound is uniform in $n$. This also means that for every $L>0$, by Markov's inequality, 
    \begin{equation}\label{eq:cpt}
        \PP\left(\|\rho_n^{K,R}\|_2 >L \right) \leq \frac{(2RC)^2}{L^2} \xrightarrow{L \to 0} 0.
    \end{equation}
    By the Banach--Alaoglu theorem, sets of the form $\{f\in L^2([-R,R]): \|{f}\|_2 \leq L\}$ are compact in the weak topology. This means by \eqref{eq:cpt} that the sequence $(\rho_n^{K,R})_n$ is tight in $L^2([-R,R])$
    and by Prokhorov's theorem there exists a weakly convergent subsequence, call its limit $\rho^{K,R}$.

    On the other hand, let $\mu^{R,n}_\infty$ be the measure induced by the density $\rho_n^{K,R}$. Clearly, $\mu^{K,R,n}_\infty$ converges almost surely weakly to $\mu^K_\infty \vert_{[-R,R]}$. Hence $\rho^{K,R}$ is a density for $\mu^K\vert_{[-R,R]}$. As $R\to \infty$, $\mu^K_\infty\vert_{[-R,R]}$ converges weakly to $\mu^K_\infty$. By a diagonal argument one can see that $\rho^{K,R}$ converges weakly to a function $\rho^K$ which is a density for $\mu^K$. 

    We now turn to $\mu_\infty$. For $K<K'$ we have that $\rho^K \leq \rho^{K'}$ almost everywhere because $\mu^K_\infty \leq \mu^{K'}_\infty$ almost surely by Proposition \ref{prop:conv_typical_to_mu}. Define now $\rho = \lim_{K \to \infty} \rho^K$ taken along the sequence $K\in \NN$, by monotonicity this limit exists almost everywhere and $\rho<\infty$ almost everywhere. Because $\mu^K_\infty$ converges weakly to $\mu_\infty$ as $K\to \infty$ by Proposition \ref{prop:conv_typical_to_mu}, we get that $\rho$ is a density for $\mu_\infty$. 
\end{proof}

\section{Open questions}\label{sec:open_questions}

\begin{quest}
    For $\beta=1/2$, show that $\mu_\infty$ does not admit a density with respect to the Lebesgue measure. This is because we believe Proposition \ref{prop:2nd_moment} to be sharp, that is we cannot achieve the exponent $\eps^2$. 
\end{quest}

\begin{quest}
    In Proposition \ref{prop:conv_typical_to_mu} we have shown that $\mu_\infty$ is determined by particles that satisfy $Y_u(t)\approx e^{-t/2}$. It would be interesting look at the empirical measure of particles that satisfy $Y_u(t)\approx e^{-t/2+\lambda t}$. This should be non--trivial for any $\lambda$ with $\vert \lambda \vert < \sqrt{2\beta}$. In particular, if you look at $\lambda=1/2$ there should {be} a phase transition when $\beta=1/8$ because this is the threshold for local survival. For BBM on $\RR$, the  number of particles {at a given speed} is counted by the additive martingale; note that here in the hyperbolic setting
    \begin{equation*}
        \sum_{u\in \mathcal{N}(t)} (Y_u(t))^\lambda e^{-t(\lambda^2+\lambda-2)/2}
    \end{equation*}
    has the same distribution as the regular additive martingale. 
\end{quest}

\begin{quest}
    It would be interesting to study $\dim \Lambda$ and $\dim \mu_\infty$ in the presence of a drift: assume that there is a drift of strength $\lambda \in \RR$ away from the origin. If $\lambda>-1/2$, $\mu_\infty$ {should} still be supported on the boundary. For $\lambda <-1/2$, we {should} almost surely have that $\mu_\infty$ is a Dirac mass at the origin, and for $\lambda=-1/2$, $\mu_t$ {should} not converge. 

    If we were to consider drift away from a point $\zeta\in \partial\DD$, the analysis becomes easy. By isometry, we can choose $\zeta =1${,} which corresponds to the unique boundary point $\infty$ at infinity in $\partial \HH$. Geodesics going through $\infty$ in $\HH$ are straight vertical lines. This means drift away from $\infty$ is a simple vertical drift of $\lambda$ (weighted by the hyperbolic metric). This means that {the} calculations in this work and in \cite{lalley_hyperbolic_1997} still apply and we should get for $\lambda>-1/2$,
    \begin{equation*}
    \dim  \mu_\infty(\lambda) = [2\beta/(2\lambda+1)]\wedge 1,
    \end{equation*}
    and
    \begin{equation*}
        \dim \Lambda(\lambda) =
        \begin{cases}
            \frac{1}{2}\left(1+2\lambda - \sqrt{(1+2\lambda)^2-8\beta}\right) \ &\text{for }\beta \leq \frac{(1+2\lambda)^2}{8}, \\
            1 & \text{else}.
        \end{cases}
    \end{equation*}
    We believe the same expressions should still hold for drift away from the origin rather than from the boundary. The reason for this is that for $z\in \DD$ we can replace drift away from the origin with drift away from $-\frac{z}{\vert z\vert}$. If we start a hyperbolic Brownian motion $Z_t$ at $z$ where $z$ is far away from the origin, then $-\frac{Z_t}{\vert Z_t\vert}$ should not vary much, hence we can replace drift away from the origin with drift away from a boundary point. 
\end{quest}

\begin{quest}
    In this paper, the underlying stochastic process from which we build the branching process is continuous in time and space. It is also natural to consider a discrete setting, that is a branching random walk on a hyperbolic group. Let $\Gamma$ be a non--elementary hyperbolic group, generated by a finite set $S$. Given a probability measure $\nu$ on $S$ with $\supp \hspace{-3pt}(\nu) =S$, we can then construct a random walk on $\Gamma$ by setting $p(x,y)=\nu(x^{-1}y)$, and hence a branching random walk (BRW) with {branching} rate $\beta>0$. Let $\vert x \vert$ be the norm of $x\in \Gamma$ in the word metric induced by $S$. Then for the random walk induced by $\nu$, $(X_n,n\geq 0)$, there {are} $\sigma^2,v>0$ such that
    \begin{equation*}
            \frac{\vert X_n\vert}{n}\xrightarrow[n\to\infty]{a.s.}v \qquad \text{and} \qquad
            \frac{\vert X_n\vert - nv}{\sqrt{\sigma^2n}} \xrightarrow[n\to \infty]{d} \mathcal{N}(0,1),
    \end{equation*}
    see for example \cite{bjorklund_central_2010} and the references therein.
    That is, $(\vert X_n\vert)_{n\geq 0}$ satisfies a strong law of large numbers and a central limit theorem. Like the hyperbolic plane $\HH$, $\Gamma$ can be endowed with a natural boundary $\partial \Gamma$ with metric $d_a$ where $a>1$ is the visual parameter. In this setting, we can again consider $\Lambda$, the set of accumulation points of the BRW on $\partial \Gamma$, and $\mu_\infty$, the limit of the empirical measure. There have been multiple works studying $\Lambda$ and its dimension, for example \cite{sidoravicius_limit_2023}, but none studying $\mu_\infty$. We believe that the methods from this paper {should} transfer easily to this setting, in particular because the LLN and CLT guarantee that an equivalent of Proposition \ref{prop:conv_typical_to_mu} still holds true. We believe the following should be true. 

    \begin{conj}
        In this setting, $\dim  \mu_\infty = \left(\frac{\beta}{v\log(a)} \right)\wedge \dim \partial \Gamma$ almost surely. 
    \end{conj}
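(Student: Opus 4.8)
The plan is to transplant the three--part architecture of Theorem \ref{thm:main:1} to the group $\Gamma$, with the word length $|X_u(n)|$ playing the role of the height coordinate $\log Y_u(t)+t/2$ of $\HH$. Since the driving walk satisfies $|X_n|/n\to v$ together with a central limit theorem at scale $\sqrt n$, the natural notion of typical particle is
\begin{equation*}
    \mathcal{T}(K)=\Bigl\{u:\ \forall n\ge K,\ \bigl|\,|X_u(n)|-vn\,\bigr|\le n^{2/3}\Bigr\},
\end{equation*}
the window $n^{2/3}$ being wide enough (by the CLT) that a single lineage remains inside with positive probability, yet $o(n)$ so that exponential growth rates are unaffected. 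One then seeks the group analogues of Proposition \ref{prop:conv_typical_to_mu} (that $\mu_\infty$ is carried by $\bigcup_K\Upsilon(K)$) and of Proposition \ref{prop:dim_typical} (the dimension of the typical accumulation set $\Upsilon(K)$).

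First I would reprove Proposition \ref{prop:conv_typical_to_mu}. Its supermartingale argument uses only the many--to--one formula, a sufficiently rich supply of bounded harmonic functions, and the fact that a single walk stays in its typicality window with probability tending to $1$ as $K\to\infty$. The many--to--one formula is unchanged; the bounded harmonic functions are furnished by the identification of the Poisson boundary of the $\nu$--walk with the Gromov boundary $\partial\Gamma$ \cite{kaimanovich_limit_2023}; and the window control is now the large--deviation estimate for $|X_n|-vn$ guaranteed by the CLT. This step should transfer with only cosmetic changes.

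The geometric heart is the dimension of $\Upsilon(K)$, where the exact isometric self--similarity of $\HH$ must be replaced by coarse hyperbolicity. The one geometric input is that the \emph{shadow} of a group element at word distance $R$ has visual diameter $\asymp a^{-R}$, and that a walk tracks a geodesic to its limit with sublinear error; consequently all typical descendants of a particle $u$ at generation $n$ converge into the shadow of $X_u(n)$, a ball of visual radius $\rho_u\asymp a^{-vn}$. For the upper bound I would cover $\Upsilon(K)$ by these $\approx e^{\beta n}$ shadows and estimate, via many--to--one,
\begin{equation*}
    \EE\sum_{u\in\mathcal{T}_n(K)}\rho_u^{\,s}\ \lesssim\ e^{\beta n}\,a^{-vns}\ =\ e^{\,n(\beta-vs\log a)},
\end{equation*}
which decays precisely when $s>\beta/(v\log a)$; together with the trivial inclusion $\Upsilon(K)\subseteq\partial\Gamma$ this gives $\dim\Upsilon(K)\le(\beta/(v\log a))\wedge\dim\partial\Gamma$. (Compactness of $\partial\Gamma$ makes the analogue of Lemma \ref{lemma:diameter} automatic, whereas on the noncompact $\partial\HH=\RR$ it forced the hypothesis $\beta<1/2$.) For the lower bound I would run the Frostman computation of Lemma \ref{lemma:lower_bound_simplified}: two typical lineages separating at generation $m$ have limits with Gromov product $\approx vm$, hence at visual distance $\asymp a^{-vm}$, while the probability of agreeing up to generation $m$ decays like $e^{-\beta m}$ as in Lemma \ref{lemma:typical_growth}, so the expected $\eta$--energy is controlled by $\sum_m\EE\bigl[d_a(\xi_1,\xi_2)^{-\eta}\mid\tau=m\bigr]e^{-\beta m}$.

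The crux is exactly the place where the exact scaling of $\HH$ fails: bounding the conditional energy $\EE[d_a(\xi_1,\xi_2)^{-\eta}\mid\tau=m]$, the group incarnation of Lemma \ref{lem:bound_on_bm}. In $\HH$ the estimate $\PP_{(x,y)}(X_\infty\in I)\le c|I|/y$ was a free consequence of the Gaussian law of $X_\infty$, and the integrability of $|X_U(\infty)-X_{U'}(\infty)|^{-\eta}$ for $\eta<1$ encoded $\dim\partial\HH=1$. Over $\Gamma$ one needs instead a regularity estimate for the harmonic (exit) measure relative to the visual metric, of the form: a walk started at distance $R$ charges a visual ball of radius $\rho$ by at most $(\rho\,a^{R})^{D}\wedge 1$, uniformly in the starting point, where $D$ is the dimension of the exit measure. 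This supplies the local integrability that caps the energy exponent, yielding finiteness of the $\eta$--energy for $\eta<(\beta/(v\log a))\wedge D$. The remaining and most delicate point is that this single--lineage Frostman bound caps a priori at the exit--measure dimension $D$ rather than at $\dim\partial\Gamma$: the two coincide when harmonic measure is Ahlfors regular of full dimension (as in $\HH$), but in the presence of dimension drop one must show separately that in the supercritical regime $\supp\mu_\infty$ fills $\partial\Gamma$ to full dimension. Establishing the uniform exit--measure regularity and resolving this supercritical cap, after which the combinatorics reduce to those of Sections \ref{sec:typical_particles}--\ref{sec:more_properties}, is where the genuine work lies.
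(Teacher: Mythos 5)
This statement is an open conjecture in Section \ref{sec:open_questions}; the paper gives no proof of it, only the informal remark that the LLN and CLT for $|X_n|$ should let the methods of Sections \ref{sec:typical_particles}--\ref{sec:more_properties} transfer. So there is no argument in the paper to compare yours against, and your proposal has to stand on its own. As a research programme it is faithful to the paper's architecture and to the intuition the paper itself offers: the typicality window $\bigl||X_u(n)|-vn\bigr|\le n^{2/3}$, the supermartingale reduction of Proposition \ref{prop:conv_typical_to_mu}, the shadow covering for the upper bound, and the Frostman/energy computation for the lower bound are all the right analogues, and your observation that compactness of $\partial\Gamma$ removes the $\beta<1/2$ hypothesis needed in Lemma \ref{lemma:diameter} is correct.

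However, what you have written is a plan, not a proof, and you say so yourself. Two steps are named but not carried out, and they are exactly the steps on which the conjecture could stand or fall. First, the uniform exit-measure regularity estimate (the group analogue of Lemma \ref{lem:bound_on_bm}) is not a formality: in $\HH$ that bound came for free from the conditional Gaussianity of $X_\infty$, whereas on a general non-elementary $\Gamma$ the harmonic measure need not be Ahlfors regular, and a bound of the form $\nu_x(B(\xi,\rho))\lesssim(\rho a^{|x|})^{D}$ uniform in $x$ is a substantive hypothesis that must either be proved or assumed. Second, and more seriously, your Frostman lower bound caps at the dimension $D$ of the harmonic measure of a single walk, which by the dimension-drop phenomenon can be strictly smaller than $\dim\partial\Gamma$; in the regime $\beta/(v\log a)>D$ the conjectured value $(\beta/(v\log a))\wedge\dim\partial\Gamma$ is not reached by your argument, and you would need a separate argument (e.g.\ that $\supp\mu_\infty=\partial\Gamma$ for large $\beta$, in the spirit of Lalley--Sellke) to close the gap between $D$ and $\dim\partial\Gamma$. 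Until those two points are resolved the proposal does not establish the conjecture, and the second point even leaves open whether the cap in the conjectured formula is the right one.
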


    Note that $\dim \partial \Gamma = \frac{\delta}{\log a}$ where $\delta$ is the exponential growth rate of the volume of $\Gamma$. This would be in contrast to the complicated expressions for $\dim \Lambda$, for example determined by \cite{sidoravicius_limit_2023} and \cite{hueter_anisotropic_2000}.
\end{quest}

\section*{Acknowledgements}

I would like to thank Julien Berestycki for making me aware of this problem. I would also like to thank Nathanaël Berestycki for making us aware of Woess' article. 
Further, I am grateful for the support of EPSRC grant EP/W523781/1.

\printbibliography[heading=bibintoc]

\end{document}